\documentclass[11pt,oneside,leqno]{amsart}
\usepackage{amsxtra}
\usepackage{amsopn}
\usepackage{amsmath,amsthm,amssymb}
\usepackage{amscd}
\usepackage{amsfonts}
\usepackage{latexsym}
\usepackage{verbatim}
\usepackage{pb-diagram}

\newcommand{\fo}{\mathcal{F}}

\theoremstyle{plain}
\newtheorem{theorem}{Theorem}[section]
\newtheorem*{theorem*}{Theorem}
\newtheorem{definition}[theorem]{Definition}

\newtheorem{prop}[theorem]{Proposition}
\newtheorem{cor}[theorem]{Corollary}
\newtheorem{rem}[theorem]{Remark}
\newtheorem{ex}[theorem]{Example}
\sloppy

\begin{document}
\title{Some results on  cosymplectic manifolds}
\author{Anna Fino and Luigi Vezzoni}
\date{\today}

\address{Dipartimento di Matematica \\ Universit\`a di Torino\\
Via Carlo Alberto 10 \\
10123 Torino\\ Italy} \email{annamaria.fino@unito.it, luigi.vezzoni@unito.it}
\subjclass{53C15, 53C12, 22E25}
\thanks{This work was supported by the Project M.I.U.R. ``Riemann Metrics and  Differenziable Manifolds''
and by G.N.S.A.G.A.
of I.N.d.A.M.}
\begin{abstract}
We  obtain a generalization of  the Kodaira-Morrow stability theorem for  cosymplectic structures. We investigate cosymplectic geometry on Lie groups and on  their compact quotients by  uniform discrete subgroups. In this way we   show that a compact  solvmanifold admits a cosymplectic structure if and only if it is a finite quotient of a torus.
\end{abstract}
\maketitle
\newcommand\C{{\mathbb C}}
\newcommand\f{\mathcal{F}}
\newcommand\g{{\frak{g}}}
\renewcommand\k{{\kappa}}
\renewcommand\l{{\lambda}}
\newcommand\m{{\mu}}
\renewcommand\O{{\Omega}}
\renewcommand\t{{\theta}}
\newcommand\ebar{{\bar{\varepsilon}}}
\newcommand\R{{\mathbb R}}
\newcommand\Z{{\mathbb Z}}
\newcommand\T{{\mathbb T}}
\newcommand{\de}[2]{\frac{\partial #1}{\partial #2}}
\newcommand\w{\wedge}
\newcommand{\ov}[1]{\overline{ #1}}
\newcommand{\Tk}{\mathcal{T}_{\omega}}
\newcommand{\ovp}{\overline{\partial}}
\section{Introduction}

 An odd-dimensional counterpart of a K\"ahler manifold is given by a  cosymplectic manifold, which is locally a product of a K\"ahler manifold with a circle or a line. Indeed,
a {\emph {cosymplectic} structure on   a $(2n + 1)$-dimensional manifold $M$ is  a  normal almost contact  metric structure $(J,\xi,\alpha,g)$ on $M$   such that  the $1$-form $\alpha$ and   the fundamental $2$-form $\omega$ are closed  (see \cite{Blair, BlairGo,CLM}).

In the context of CR geometry,   cosymplectic structures can  be
also  viewed as a special class of \emph{Levi-flat CR-structures}
since the foliation  $\ker\alpha$,  endowed with the complex
structure $J$, defines a  complex subbundle $\mathcal{F}^{\C}$ of
the complexified tangent bundle  $TM\otimes\C$ satisfying $
\mathcal{F}^{\C}\cap\overline{\mathcal{F}^{\C}}=0$ and such that the
spaces  of sections $\Gamma(\mathcal{F}^{\C})$ and
$\Gamma({\mathcal{F}}^{\C} \oplus\overline{\mathcal{F}^{\C}})$ are
both closed under the Lie brackets (for more details on CR
structures   see for instance \cite{DG}).

Trivial  examples of  cosymplectic manifolds  are given  by  a
product of a  $2n$-dimensional K\"ahler manifold with  a
$1$-dimensional manifold and by  products of the $(2m +
1)$-dimensional real torus ${\mathbb T}^{2 m + 1}$ with the
$r$-dimensional complex projective space $\C {\mathbb P}^r$, where
$m, r \geq 0$ and $m + r =n$.   Not many examples of  non-trivial
cosymplectic manifolds are known.  In \cite{CLM}  an example of
$3$-dimensional compact cosymplectic manifold not topologically
equivalent to a global product of a compact K\"ahler manifold with
the circle  was given. Moreover,  in \cite{marrero} Marrero and
Padron found some examples of $(2n + 1)$-dimensional  compact
cosymplectic manifolds which are not topologically equivalent to the
standard example  ${\mathbb T}^{2 m + 1} \times \C {\mathbb P}^r$.
These manifolds are  constructed as suspensions with fibre a compact
K\"ahler manifold of representations defined by Hermitian isometries and they are compact
solvmanifolds, i.e. compact quotients of a simply-connected solvable
Lie group by a  uniform discrete  subgroup.  We will show that in
fact such solvmanifolds are finite quotients of tori.

The presence of so few  cosymplectic compact examples is in part due to many  topological restrictions. Indeed,
in \cite{BlairGo,CLM, Fujitani, CDM, LM} some  topological properties of compact  normal contact metric  manifolds and compact cosymplectic manifolds are shown. For instance, if $(M,J, \xi,\alpha,g)$ is a compact
$(2n+1)$-dimensional cosymplectic manifold, then the Betti numbers $b_i(M)$ satisfy the  conditions:
\begin{enumerate}
\item $b_i(M)$ are non-zero for all $0\leq i\leq 2n+1$;
\item $b_0(M)\leq b_1(M)\leq\dots \leq b_n(M)=b_{n+1}(M)$;
\item $b_{n+1}(M)\geq b_{n+2}(M)\geq\dots \geq b_{2n+1}(M)$.
\vspace{0.2cm}
\end{enumerate}

\noindent
Moreover, like for  the K\" ahler case,
 a compact cosymplectic manifold $M$ is formal in the sense of Sullivan \cite{sullivan}.

In order to find new cosymplectic structures  we will study small deformations of  cosymplectic structures and  determine which types of Lie groups are cosymplectic.

More precisely, in the first part of the paper  we will examine how
a  cosymplectic structure $(J,\xi,\alpha,g)$ on a  compact $(2n+
1)$-dimensional manifold changes under small deformations of the
complex structure  $J$ on $\ker \alpha$. To this aim we will
interpret a cosymplectic structure in terms of smooth foliations on
the manifold. Indeed, if $(M,J,\xi,\alpha,g)$ is a cosymplectic
manifold, then the pair $(\ker\alpha,\omega)$ defines a
K\"ahler-Riemann foliation on $M$. Therefore it is possible to
apply the results
   obtained in \cite{ALK,D} about K\"ahler-Riemann foliations, that we  will review in Section 2 and 3.
   By  using similar methods to the ones  introduced  by  Kodaira and Morrow for the stability theorem of K\"ahler manifolds (see Theorem
4.6 of \cite{KM}),  in Section 5 we will  obtain the following

\begin{theorem}\label{main}
Let $(M,J,\xi,\alpha,g)$ be a compact cosymplectic manifold and let $J_t$ be a  family of endomorphisms of $\,TM$  depending differentiably on $t$ and satisfying
$$
J_t^2=-{\rm Id}+\alpha\otimes \xi\,,\quad J_0=J\,,  \quad N_{J_t} =0\,,
$$
where $N_{J_t}$ denotes the  Nijenhuis tensor of $J_t$.
Then there exists, for $\vert t\vert$ small, a Riemannian metric $g_t$ on $M$ such that $g_0=g$ and
 $(J_t,\xi,\alpha,g_t)$ defines a cosymplectic structure on $M$.
\end{theorem}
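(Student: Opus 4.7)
My plan is to adapt the Kodaira--Morrow stability argument for K\"ahler metrics to the foliated framework set up in Sections 2--3. Since $\alpha$ and $\xi$ are held fixed throughout the deformation, the foliation $\mathcal{F}=\ker\alpha$ and its transverse line field do not change; only the leafwise almost complex structure $J_t|_{\ker\alpha}$ is perturbed, and $N_{J_t}=0$ (together with the immediate consequence $J_t\xi=0$) keeps it leafwise integrable. The whole problem thus reduces to producing, for small $t$, a $d$-closed real $2$-form $\omega_t$ with $i_\xi\omega_t=0$, leafwise of type $(1,1)_{J_t}$ and positive on $\ker\alpha$, with $\omega_0$ as initial value; given such $\omega_t$, a direct computation using $J_t^2=-\mathrm{Id}+\alpha\otimes\xi$ and $\alpha\circ J_t=0$ shows that
\[
g_t(X,Y):=\omega_t(X,J_tY)+\alpha(X)\alpha(Y)
\]
is symmetric, positive-definite, compatible with $(J_t,\xi,\alpha)$, and has $\omega_t$ as its fundamental form.

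As the candidate I would first form the leafwise $(1,1)_{J_t}$-projection of the original K\"ahler form,
\[
\omega_t^{1,1}:=\tfrac{1}{2}\bigl(\omega_0(\cdot,\cdot)+\omega_0(J_t\cdot,J_t\cdot)\bigr),
\]
which is real, satisfies $i_\xi\omega_t^{1,1}=0$, and converges to $\omega_0$ as $t\to 0$, hence is positive on $\ker\alpha$ for $|t|$ small; in general, though, it is not closed. Writing $\omega_0=\omega_t^{1,1}+\phi_t+\bar\phi_t$ with $\phi_t=O(t)$ the leafwise $(2,0)_{J_t}$ part, the bidegree decomposition of $d\omega_0=0$ yields in particular $\partial_t\phi_t=0$, so that $\bar\partial_t\phi_t$ is $\bar\partial_t$-exact and $\partial_t$-closed.

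The decisive step is the Hodge-theoretic correction. Using the $\partial\bar\partial$-lemma for K\"ahler--Riemann foliations (the input from Sections 2--3, applied to the perturbed leafwise complex structure $J_t$), I would solve $\partial_t\bar\partial_t\mu_t=\bar\partial_t\phi_t$ for a leafwise $(1,0)_{J_t}$ form $\mu_t=O(t)$ depending smoothly on $t$ and vanishing at $t=0$. Setting
\[
\psi_t:=\bar\partial_t\mu_t+\partial_t\bar\mu_t,\qquad \omega_t:=\omega_t^{1,1}+\psi_t,
\]
a bidegree check shows that $\psi_t$ is a real leafwise $(1,1)_{J_t}$ form, and combining $\partial_t\phi_t=0$ with its complex conjugate gives $d\omega_t=0$. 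Since $\psi_t=O(t)$, $\omega_t$ remains positive on $\ker\alpha$ for $|t|$ small, and the preceding construction of $g_t$ then yields a cosymplectic structure $(J_t,\xi,\alpha,g_t)$: the almost-contact-metric axioms and $N_{J_t}=0$ are given, $d\alpha=0$ because $\alpha$ is not deformed, and $d\omega_t=0$ by construction.

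The main obstacle is precisely the invocation of this foliated $\partial\bar\partial$-lemma with smooth dependence on $t$: one must ensure that the K\"ahler--Riemann Hodge decomposition recalled in Sections 2--3 remains valid for the perturbed structure and that the associated Green/solution operator depends smoothly on the deformation parameter---this is the analogue, in the foliated setting, of the elliptic-stability input behind Theorem 4.6 of \cite{KM}. Once this is in place, the remaining verifications are routine.
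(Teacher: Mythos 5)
Your overall strategy --- start from a $J_t$-compatible but non-closed $2$-form, correct it by a Hodge-theoretic term, and recover the metric via $g_t(X,Y)=\omega_t(X,J_tY)+\alpha(X)\alpha(Y)$ --- is the same as the paper's, and both your initial reduction and your final step are fine. But the middle of the argument has genuine gaps, concentrated exactly where you yourself place ``the main obstacle''. The invocation of a $\partial\overline{\partial}$-lemma for the foliation $(\ker\alpha,J_t)$ is circular for $t\neq 0$: that lemma needs the K\"ahler identities, hence a closed compatible leafwise form for $J_t$, which is precisely what you are trying to construct. Moreover, even at $t=0$ the operators $\partial_\fo,\overline{\partial}_\fo$ are only \emph{leafwise} elliptic, so their images need not be closed: the decompositions of Sections 2--3 involve Fr\'echet closures $\overline{{\rm im}}$, and the spaces $\overline{H}^{r,s}$ are in general infinite dimensional. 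Hence exact solvability of $\partial_t\overline{\partial}_t\mu_t=\overline{\partial}_t\phi_t$, let alone smooth dependence of $\mu_t$ on $t$, does not follow from the foliated Hodge theory you cite. The paper circumvents both problems by introducing a genuinely strongly elliptic fourth-order operator $E_t$ on the compact manifold $M$, obtained by adding the transverse terms $\delta_{-1,0}d_{1,0}\delta_{-1,0}d_{1,0}+\delta_{-1,0}d_{1,0}$ to the Kodaira--Morrow operator; it uses the K\"ahler identities only at $t=0$ to compute $\dim\mathbb{F}_0^{1,1}$, and then proves $\dim\mathbb{F}_t^{1,1}=\dim\mathbb{F}_0^{1,1}$ for small $t$ by a semicontinuity and dimension-count argument (Proposition \ref{mainprop}); only after that does differentiable dependence of the projection $F_t$ follow from Kodaira--Morrow. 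This constancy-of-dimension step is the heart of the proof and is entirely absent from your proposal.

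There is a second gap you do not flag: your correction $\psi_t=\overline{\partial}_t\mu_t+\partial_t\overline{\mu}_t$ only kills the leafwise components of $d\omega_t$. Since $\ker\alpha$ has codimension one, on $\Omega^{0,2}$ one has $d=d_{1,0}+d_{0,1}$, and $d\omega_t=0$ also requires the transverse component $d_{1,0}\omega_t$ to vanish. From $d_{1,0}\omega_0=0$ you only get $d_{1,0}\omega_t^{1,1}=-d_{1,0}(\phi_t+\overline{\phi}_t)$, which is $O(t)$ but not zero, and nothing in your construction forces $d_{1,0}\psi_t$ to cancel it. In the paper the condition $d_{1,0}\phi=0$ is built into the characterization of $\ker E_t$ (Proposition \ref{deco}), so the projected form $F_t(\widetilde{\omega}_t)$ is closed for the full de Rham differential; as it stands, your $\omega_t$ is only leafwise closed, which is not enough for a cosymplectic structure.
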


In the last two sections  we will study  left-invariant cosymplectic structures on Lie groups and  on their compact quotients  by  uniform discrete subgroups.  Summing up all the results  we will get  the following

\begin{theorem}
There exists  a one-to-one correspondence between cosymplectic and K\"ahler Lie algebras.\newline
A cosymplectic unimodular
Lie group is  necessarily flat and solvable. Moreover,  if a solvmanifold admits a cosymplectic structure,  then it  is a finite quotient of a torus.
\end{theorem}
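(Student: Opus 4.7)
\emph{Correspondence.} Given a cosymplectic Lie algebra $(\g, J, \xi, \alpha, g)$, the Chevalley-Eilenberg reading of $\di\alpha = 0$ yields $\alpha([X,Y])=0$ for all $X,Y\in\g$, so $\mathfrak h := \ker\alpha$ is a codimension-one ideal. I would then establish that $\xi$ is central: the identity $\nabla\xi = 0$ (a classical consequence of the cosymplectic axioms) makes $\mathrm{ad}_\xi$ act as a $g$-skew endomorphism of $\mathfrak h$, the normality $N_J = 0$ together with $\di\alpha = 0$ forces it to be $J$-linear, and the compatibility of the triple $(g, J, \omega)$ combined with $\di\omega = 0$ reduces it to zero. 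Consequently $\g = \R\xi \oplus \mathfrak h$ is a Lie algebra direct sum, and $(J, g, \omega)|_{\mathfrak h}$ is a Kähler structure. Conversely, any Kähler Lie algebra $(\mathfrak h, J_0, g_0, \omega_0)$ extends to a cosymplectic one on $\mathfrak h \oplus \R\xi$ with $\xi$ central, orthogonal to $\mathfrak h$, and $\alpha$ its dual; the identities $J^2 = -\mathrm{Id} + \alpha\otimes\xi$, $\di\alpha = 0$, $\di\omega = 0$ and $N_J = 0$ all follow immediately.

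\emph{Flatness and solvability.} Since $\g = \R\xi \oplus \mathfrak h$ is a direct sum, $\g$ is unimodular iff $\mathfrak h$ is. Hano's theorem asserts that a unimodular Kähler Lie algebra admits a flat left-invariant metric; Milnor's classification of Lie groups carrying flat left-invariant metrics then forces $\mathfrak h$ to be solvable. Extending the flat metric orthogonally by the central summand $\R\xi$ preserves flatness, and the central extension by an abelian factor preserves solvability. Hence the cosymplectic unimodular Lie group is flat and solvable.

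\emph{Solvmanifold.} Let $M = G/\Gamma$ be a compact solvmanifold carrying a cosymplectic structure; compactness forces $G$ unimodular. The plan is to replace the given structure by a left-invariant one, apply the previous step, and conclude via Bieberbach's theorem. Averaging $\alpha$ and $\omega$ against the bi-invariant Haar measure (after passing to a finite covering if $G$ is not completely solvable) produces invariant closed forms cohomologous to the originals, while an analogous averaging of $J$ on $\ker\alpha$, followed by projection onto the set of compatible almost complex structures, yields an invariant cosymplectic structure on $G$. By the algebraic step above, the associated left-invariant metric on $G$ is flat and $\g$ is solvable; this metric descends to a flat Riemannian metric on $M$, and Bieberbach's classification of compact flat Riemannian manifolds then identifies $M$ as a finite quotient of a flat torus.

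\emph{Main obstacle.} The decisive difficulty lies in the symmetrization step: on a general (not completely solvable) solvmanifold, averaging does not automatically preserve the algebraic compatibility $J^2 = -\mathrm{Id} + \alpha\otimes\xi$, the integrability $N_J = 0$, or the positivity of $g$, so a passage to a finite cover is typically unavoidable --- which is precisely what accounts for the ``finite quotient'' in the conclusion. Once this step is handled, the remainder of the proof combines the purely algebraic assertions of the correspondence and the Hano-Milnor theorems with Bieberbach's classical result.
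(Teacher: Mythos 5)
Your treatment of the correspondence contains a genuine error: $\xi$ is not central in general. From $d\omega(\xi,X_1,X_2)=0$ one obtains that $J\,{\rm ad}_\xi$ is \emph{self-adjoint} on ${\mathfrak h}=\ker\alpha$, which together with the $J$-linearity of ${\rm ad}_\xi$ coming from normality yields only that ${\rm ad}_\xi$ is skew-adjoint and commutes with $J$ --- not that it vanishes. The Marrero--Padr\'on algebra recalled in the paper, with $[X_i,Z]=\tfrac32\pi Y_i$, $[Y_i,Z]=-\tfrac32\pi X_i$ and $\xi=Z$, is cosymplectic with $\xi$ non-central. The correct statement (Theorem \ref{constructionderivation}) is that $(2n+1)$-dimensional cosymplectic Lie algebras correspond to $2n$-dimensional K\"ahler Lie algebras \emph{equipped with a skew-adjoint derivation commuting with $J$}, and ${\mathfrak g}$ is only the semidirect sum $\R\xi\oplus_{{\rm ad}_\xi}{\mathfrak h}$; your direct-sum picture would make every cosymplectic Lie algebra a trivial product, which is false. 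The unimodularity/flatness/solvability part survives this correction (one still has ${\rm tr}\,{\rm ad}^{\mathfrak g}_X={\rm tr}\,{\rm ad}^{\mathfrak h}_X$ for $X\in{\mathfrak h}$ and ${\rm tr}\,{\rm ad}_\xi={\rm tr}\,D=0$, so Hano and Lichnerowicz--Medina apply to ${\mathfrak h}$, and an extension of a solvable ideal by $\R$ is solvable), but it must be run through the semidirect structure rather than a product.

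The solvmanifold step is where the decisive gap lies. The averaging you propose cannot be carried out: the set of complex structures on $\ker\alpha$ is not convex, so averaging $J$ destroys both $J^2=-{\rm Id}+\alpha\otimes\xi$ and $N_J=0$; and for solvmanifolds that are not completely solvable the left-invariant forms need not compute the de Rham cohomology (Hattori's theorem fails), so one cannot even arrange the averaged $\alpha$ and $\omega$ to be cohomologous to the originals, let alone keep $\alpha\wedge\omega^n\neq 0$. Passing to a finite cover repairs none of this; you correctly flag the step as the main obstacle but offer no way around it, and in fact it is not known that a cosymplectic solvmanifold carries an invariant cosymplectic structure. The paper avoids invariant structures entirely: it observes that $M\times S^1=(G\times\R)/(\Gamma\times\Z)$ is a K\"ahler solvmanifold, invokes Hasegawa's theorem that such a manifold is a finite quotient of a complex torus, and reads off the torus-bundle structure of $M$ from the resulting Bieberbach group. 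The ``finite quotient'' in the conclusion comes from the holonomy of that Bieberbach group, not from a covering introduced to make an averaging argument work.
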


Although the first part of this theorem comes from a result of Dacko (see \cite{D1}), for sake of completeness
we prove it in section 6 (see the proof of Theorem 6.1).
More precisely, from \cite{D1}, we have that if  a  $2n$-dimensional K\"ahler Lie algebra has  a particular type of derivation, then one can construct   a cosymplectic  Lie algebra of dimension $2n + 1$. Since by
 \cite{Dorfmeister} any solvable  K\"ahler Lie algebra can be obtained by using   modifications and  normal $J$-algebras, it turns out that  the previous construction can be applied  to solvable K\"ahler Lie algebras.

The fact  that  a cosymplectic unimodular
Lie group is flat and solvable   follows from the results in  \cite{LM} and  \cite{Hano}   about symplectic  and K\" ahler Lie groups. Therefore it is natural to investigate which types of solvmanifolds admit a cosymplectic structure. Like in the K\"ahler case  (see  \cite{Ha})  the cosymplectic condition imposes strong restrictions.

\section{Riemannian foliations}
In this section we will recall  some   properties about Riemannian foliations that we will use in the next sections (for more details see for instance \cite{ALK,Molino1,Molino2,Reinhart}).

\begin{definition}\emph{ A smooth   foliation $\fo$ on  a  compact Riemannian manifold $(M,g)$ is called a \emph{Riemannian foliation} if
the Riemannian metric $g$ is \emph{bundle-like}, i.e.  if the normal plane field to $\fo$ is totally geodesic. }
\end{definition}
In other words a Riemannian foliation is a foliation which is locally defined by Riemannian submersions.
Such foliations have been introduced by  Reinhart \cite{Reinhart}  and studied subsequently  by  Molino \cite{Molino1,Molino2}.

Consider now
a $\Z$-graded  Riemannian vector  bundle $E$ on $M$ and denote by $\Gamma(E)$  the space of the its smooth sections. Let $d\colon \Gamma(E)\to \Gamma(E)$
be a first-order leafwise elliptic  differential operator
and denote by $\delta$  the formal adjoint of $d$ with respect to the Riemannian metric $g$. Then the Laplacian
$$
\Delta:=(d+\delta)^2
$$
is a self-adjoint  differential operator on $\Gamma (E)$. We recall that a
\emph {leafwise differential operator} on $\Gamma(E)$ is a differential operator whose local expressions only contain
 derivatives
along leaf directions of $\f$ and it can be restricted to the leaves. If in addition  the restriction to the leaves is elliptic, then the leafwise differential operator is called  \emph{leafwise elliptic}. Moreover, a \emph{transversely elliptic} differential operator is a differential operator  whose leading symbol is an isomorphism at non-trivial
covectors normal to the leaves, where by normal to the leaves we mean that  the covector  vanishes on vectors tangent to the leaves.

In the case of transversely elliptic differential operators Alvarez L\'opez and Kordyukov  in  \cite{ALK}  proved that if  $A\colon \Gamma(E)\to\Gamma(E)$ is a transversely elliptic first order differential operator and  there exist morphisms
$G,H,K$ and $L$ such that
$$
Ad\pm dA=Gd+dH\,,\quad A\delta\pm \delta A=K\delta+\delta L\,,
$$
then the orthogonal projection $\pi\colon\Gamma(E)\to\ker\Delta$ is a continuous operator on $\Gamma(E)$ and
\begin{equation}
\label{ALK1}
\Gamma(E)=\ker\Delta\oplus\overline{{\rm im}\, d}\oplus\overline{{\rm im}\, \delta}\,,
\end{equation}
where the bar denotes the closure in the Fr\'echet space $\Gamma(E)$.

\noindent In this way one gets a natural splitting of $\Gamma(E)$ in terms of the Laplacian $\Delta$ and by \cite{ALK} it is possible to apply  the previous result to the case  of Riemannian foliations, constructing a leafwise differential complex of $\f$ as follows.

Let $\Omega$ be the de Rham cohomology algebra of $M$.
Denote by  $T\f^*$  the dual bundle of  the tangent bundle $T\f$ with respect to the duality
induced by the metric $g$ and by $T\f^{\perp}$ the bundle orthogonal to $T\f$. One has the  following bigrading of the algebra $\Omega$
\begin{equation}
\label{forme}
\Omega^{u,v} =\Gamma\Big(\bigwedge^u T\f^{\bot*}\otimes\,\,\bigwedge^{v}T\f^*\Big)\,,\quad u,v\in\Z\,.
\end{equation}
Since
$$
d(\Omega^{u,v}) \subseteq \Omega^{u+1,v}\oplus\Omega^{u,v+1}\oplus\Omega^{u+2,v-1}\,,
$$
then the  de Rham differential $d$  and its codifferential $\delta$ decompose respectively as
$$
d=d_{0,1}+d_{1,0}+d_{2,-1}\,,\quad
\delta=\delta_{0,-1}+\delta_{-1,0}+\delta_{-2,1}\,,
$$
where the double subindices denote the corresponding bidegrees of
the bihomogeneous  components. Each $\delta_{i,j}$ is the formal
adjoint of $d_{-i, -j}$ and if we denote by $D_0 = d_{0,1} +
\delta_{0,-1}$, then $D_0$ and  the Laplacian $\Delta_0 = D_0^2$ are
leafwise elliptic and symmetric  differential operators.

By applying  \eqref{ALK1}  to  $(\Omega, d_{0,1})$ Alvarez L\'opez and Kordyukov   showed the following result
\begin{theorem}[\cite{ALK}, Theorem B]\label{ALK2} Assume that $\fo$ is a Riemannian foliation on a compact manifold  $M$ with a bundle-like metric; then,
with the notation stated above,
the following leafwise Hodge decomposition
\begin{equation}\label{HodgeF}
\Omega=\ker  \Delta_0  \oplus \overline{{\rm im}\,  \Delta_0} = (\ker d_{0,1} \cap \ker \delta_{0,-1}) \oplus\overline{{\rm im}\, d_{0,1}}\oplus \overline{{\rm im}\, \delta_{0,-1}}
\end{equation}
holds.
\end{theorem}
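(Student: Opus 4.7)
The plan is to apply the abstract decomposition \eqref{ALK1} recalled in the text to the pair $(\Omega, d_{0,1})$, equipped with a suitably chosen transversely elliptic auxiliary operator $A$. First I would verify that $d_{0,1}$ is a first order leafwise elliptic differential operator on the graded bundle whose sections are $\Omega$: by bundle-likeness of $g$ the foliation is locally given by a Riemannian submersion, so along each leaf $L$ the operator $d_{0,1}$ reduces to the intrinsic de Rham differential on $L$ with coefficients in the transverse exterior bundle $\bigwedge T\f^{\perp *}$, and hence $D_0=d_{0,1}+\delta_{0,-1}$ is the associated leafwise Dirac-type operator with leafwise elliptic square $\Delta_0$, as already noted in the paper.

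Next I would take for $A$ the ``transverse'' part of the full de Rham-Hodge operator,
\[
A := (d+\delta) - D_0 = d_{1,0}+d_{2,-1}+\delta_{-1,0}+\delta_{-2,1}\,.
\]
Splitting $T^*M=T\f^{\perp *}\oplus T\f^*$ and a covector as $\xi=\xi^{\perp}+\xi^{\parallel}$, the principal symbol of $A$ at $\xi$ is the Clifford action of $\xi^{\perp}$ on $\bigwedge T^*M$, which is invertible as soon as $\xi^{\perp}\neq 0$; in particular $A$ is transversely elliptic and of first order, as required to invoke \eqref{ALK1}.

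The main step is then the verification of the two commutation hypotheses of \eqref{ALK1}, namely
\[
A\,d_{0,1}\pm d_{0,1}A=G\,d_{0,1}+d_{0,1}H,\qquad A\,\delta_{0,-1}\pm\delta_{0,-1}A=K\,\delta_{0,-1}+\delta_{0,-1}L\,.
\]
These would follow by expanding $d^2=0$ and $\delta^2=0$ and collecting terms of fixed bidegree; one obtains in particular $d_{0,1}d_{1,0}+d_{1,0}d_{0,1}=0$ and $d_{1,0}^2+d_{0,1}d_{2,-1}+d_{2,-1}d_{0,1}=0$, together with their formal adjoint counterparts. The ``mixed'' anti-commutators such as $\delta_{-1,0}d_{0,1}+d_{0,1}\delta_{-1,0}$ have vanishing second order principal symbols, because leafwise exterior and transverse interior Clifford actions act on complementary tensor factors of $\bigwedge T\f^{\perp *}\otimes\bigwedge T\f^*$; the residual pieces are of curvature type and provide the morphisms $G,H,K,L$.

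With those hypotheses in place, \eqref{ALK1} immediately gives $\Omega=\ker\Delta_0\oplus\overline{\operatorname{im}\,d_{0,1}}\oplus\overline{\operatorname{im}\,\delta_{0,-1}}$, while the identification $\ker\Delta_0=\ker d_{0,1}\cap\ker\delta_{0,-1}$ is automatic from $\Delta_0=d_{0,1}\delta_{0,-1}+\delta_{0,-1}d_{0,1}$ and formal self-adjointness. I expect the principal obstacle to be precisely the verification of the commutation identities: in the transversely elliptic (rather than fully elliptic) setting, the cross terms mixing leafwise and transverse differentials must be tracked carefully and recast in the form $Gd+dH$ prescribed by the abstract theorem, with $G,H,K,L$ expressed in terms of the curvature and the second fundamental form of the foliation.
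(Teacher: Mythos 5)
This theorem is not proved in the paper at all: it is quoted verbatim as Theorem B of \cite{ALK}, and the authors' only ``proof'' is the remark that it follows by applying the abstract splitting \eqref{ALK1} to $(\Omega,d_{0,1})$ with the transversely elliptic operator $D_{\perp}=d_{1,0}+\delta_{-1,0}$, deferring the verification of the commutation hypothesis to Proposition 3.1 of \cite{ALK}. Your strategy is the same one (your $A$ differs from $D_\perp$ only by the zero-order pieces $d_{2,-1}$ and $\delta_{-2,1}$, which is harmless), and your checks of first order, transverse ellipticity, and the identification $\ker\Delta_0=\ker d_{0,1}\cap\ker\delta_{0,-1}$ are fine. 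The problem is that the one step you flag as ``the principal obstacle'' is exactly the mathematical content of the theorem, and the argument you offer for it does not close the gap.

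Concretely: for the mixed anticommutators such as $\delta_{-1,0}d_{0,1}+d_{0,1}\delta_{-1,0}$ you observe that the second-order principal symbol vanishes and then assert that ``the residual pieces are of curvature type and provide the morphisms $G,H,K,L$.'' Neither half of this is enough. A vanishing second-order symbol only makes the anticommutator first order, and a general first-order operator is \emph{not} of the required form $G\,d_{0,1}+d_{0,1}H$ with $G,H$ bundle morphisms; even a genuinely zero-order remainder $R$ need not be expressible as $G\,d_{0,1}+d_{0,1}H$. Moreover, your verification nowhere uses the bundle-like hypothesis, which is the whole point of the theorem: the leafwise Hodge decomposition is known to fail for non-Riemannian foliations, so any correct proof must invoke bundle-likeness precisely at this step. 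In \cite{ALK} this is done by an explicit local construction (a zero-order operator built from $\iota_X d_{1,0}$ for $X$ normal to $\fo$ and the leafwise Hodge star, in foliated charts where the transverse part of the metric is basic), yielding
\begin{equation*}
D_{\perp}\,\delta_{0,-1}+\delta_{0,-1}\,D_{\perp}=K\,\delta_{0,-1}+\delta_{0,-1}\,K
\end{equation*}
with $K$ a morphism. Without carrying out (or at least correctly locating) this computation, your proposal reduces to restating the hypotheses of \eqref{ALK1} rather than verifying them.
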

\medskip

Note that the   operator
$$
d_{0,1}: \Omega^{0,v}\to\Omega^{0,v+1}
$$
can be canonically identified with the de Rham derivative $d_\fo$ on the leaves, or equivalently it can be  defined by
$$
(d_\fo \, \alpha)_{|l}=d(\alpha_{|l}),
$$
for any $\alpha\in\Omega^{0,v}$ and any leaf $\,l$ of $\fo$. Moreover, $(\Omega, d_{0,1})$ can be considered, up to a sign, as the leafwise de Rham complex of $\fo$ with coefficients in the vector bundle $\Lambda (TM / T \fo)^*$.

 Furthermore we denote by
\begin{equation} \label{laplacianf}
\delta_\fo=-*_\fo d_\fo  *_\fo\, , \quad \Delta_{\fo} = (d_{\fo} + \delta_{\fo})^2,
\end{equation}
where $*_\fo$ denotes the  Hodge star operator restricted to $\Lambda T {\mathcal F}^*$. In particular    by \cite[Corollary C]{ALK} one has that, if   $\fo$ is  a Riemannian foliation on a compact manifold with a bundle-like metric and let $V = \R$ or $\C$, then with respect to such a metric we have the leafwise Hodge decomposition
$$
\Omega^{0,v} (V) = \ker \Delta_{\fo} \oplus \overline {{\mbox{\rm im}} \, \Delta_{\fo}} = (\ker d_{\fo} \cap \ker \delta_{\fo}) \oplus  \overline{ {\mbox{\rm im}}  \, d_{\fo}} \oplus  \overline{ {\mbox{\rm im}}  \, \delta_{\fo}},
$$
where $(\Omega^{0,v} (V) , d_{\fo})$ is the leafwise de Rham complex with coefficients in $V$.

\section{Complex and K\" ahler-Riemannian  Foliations}

In this section we will deal with  special types of complex  Riemannian foliations, called K\" ahler-Riemann.
First of all we recall some basic facts of complex geometry:\\
A \emph{complex manifold} can be defined as a pair $(M,J)$, where $M$ is a $2n$-dimensional smooth manifold and $J\in{\rm End}(TM)$ satisfies
$$
J^{2}=-{\rm Id}\,,\quad [JX,JY]=J[JX,Y]+J[X,JY]+[X,Y]\,,\quad\mbox{for } X,Y\in\Gamma(TM)\,.
$$
The complex structure $J$ induces the natural splitting $TM\otimes \C=T^{1,0}M\oplus T^{0,1}M$, where
$$
\begin{aligned}
& T_{x}^{1,0}M=\{v\in T_xM\otimes \C\,\,:\,\,Jv=\operatorname{i}v\}\,;\\
& T_{x}^{1,0}M=\{v\in T_xM\otimes \C\,\,:\,\,Jv=-\operatorname{i}v\}\,.
\end{aligned}
$$
Moreover, if $\Omega^{v}$ denotes the vector bundle of complex differential forms on $M$, then
$$
\Omega^{1}=\Omega^{1,0}\oplus \Omega^{0,1}\,,
$$
where $\Omega^{1,0}$ and $\Omega^{0,1}$ are the dual vector bundles of $T^{0,1}M$ and $T^{0,1}M$, respectively.
Consequently $\Omega^{v}$ splits in
$$
\Omega^v=\bigoplus_{r+s=v}\Omega^{r,s}\,
$$
where
$$
\Omega^{r,s}=\Omega^{r,0}\otimes \Omega^{0,s}
$$
and
$$
\Omega^{r,0}=\bigwedge^r \Omega^{1,0}\,,\quad \Omega^{0,s}=\bigwedge^r\Omega^{0,1}\,.
$$

Now we consider the case of smooth foliations. 
Let $\mathcal{F}$ be a smooth $2n$-dimensional foliation on a compact  Riemannian manifold $(M, g)$. An
\emph{almost complex structure} on $\fo$ is a section of the bundle
${\rm End}(T\fo)$ of the endomorphisms of $T\fo$ satisfying $J^2=-{\rm Id}$.
If further the Nijenhuis tensor
$$
N_{J}(X,Y):=[JX,JY]-J[JX,Y]-J[X,JY]-[X,Y]\,,\quad\mbox{for } X,Y\in\Gamma(T\fo)
$$
vanishes, then $J$ is said to be a \emph{complex structure} on $\fo$ and
the pair $(\fo,J)$ is called a \emph{complex foliation}.

A Riemannian metric $g$ is  \emph{compatible}
with a complex foliation $(\mathcal{F},J)$ if
$$
g(JX,JY)=g(X,Y)\,,\quad \mbox{for any }X,Y\in\Gamma(T\fo)\,.
$$
Let us consider now a complex foliation $(\mathcal{F},J)$ on Riemannian manifold $(M,g)$ and we assume that the metric $g$ is compatible with $J$.
The tangent bundle to $\mathcal{F}$ decomposes as
$$
T\fo=T\fo^{1,0}\oplus T\fo^{0,1}\,
$$
and consequently, if we denote by $\Omega^{0,v}=\Omega^{0,v} (\C)$  the bundle of  complex  differential forms on $\mathcal{F}$, we have  the splitting
$$
\Omega^{0,v}=\bigoplus_{r+s=v}\Omega^{0,r,s}.
$$
Therefore, for the complex de Rham algebra $\Omega$ of $M$, we get the decomposition
\begin{equation} \label{omegaurs}
 \Omega^p=\bigoplus_{u+v=p}\Omega^{u,v}=\bigoplus_{u+r+s=p} \Omega^{u,r,s}\,,
\end{equation}
where the spaces $\Omega^{u,v}$ are defined by \eqref{forme}.

We denote by
$$
\begin{aligned}
&d_{i,j,k}\colon \Omega^{u,r,s}\to\Omega^{u+i,r+j,s+k}\\
&\delta_{i,j,k}\colon \Omega^{u,r,s}\to\Omega^{u+i,r+j,s+k}\,.
\end{aligned}
$$
the components of the operators $d$ and $\delta$ with respect to the above  decomposition.

A complex structure $J$ on a smooth foliation $\mathcal{F}$ induces a natural complex structure on the leafs of $\mathcal{F}$.
Furthermore the exterior  derivative  $d_\fo$ along the leaves splits as
$$
d_\fo=\partial_\fo+\ovp_\fo\,,
$$
where $\ovp_\fo$ and $\partial_\fo$ are defined by the relations
$$
(\partial_\fo \, \phi)_{|l}=\partial(\phi_{|l})\,,\quad (\ovp_\fo \, \phi)_{|l}=\ovp(\phi_{|l})\,,
$$
for any $\phi \in\Omega^{0,v}$ and any leaf $l$ of $\fo$. Note that $\partial_\fo$ and $\ovp_\fo$ can be respectively identified
with the operators
$$
\begin{aligned}
& d_{0,1,0}\colon \Omega^{0,u,v}\to\Omega^{0,u+1,v}\,,\\
& d_{0,0,1}\colon \Omega^{0,u,v}\to\Omega^{0,u,v+1}\,.
\end{aligned}
$$
Let $\vartheta_\fo$ and $\overline{\vartheta}_\fo$ be the formal adjoints of $\overline{\partial}_\fo$ and $\partial_\fo$, respectively. Then
the maps
\begin{equation} \label{box}
\begin{aligned}
&\overline{\square}_\fo=\ovp_\fo\vartheta_\fo+\vartheta_\fo\ovp_\fo\colon \Omega^{0,r,s}\to \Omega^{0,r,s}\,,\\
&\square_\fo=\partial_\fo\overline{\vartheta}_\fo+\overline{\vartheta}_\fo\partial_\fo
\colon\Omega^{0,r,s}\to \Omega^{0,r,s}\,,
\end{aligned}
\end{equation}
are elliptic leafwise differential operators.

We recall now the definition and some properties of K\"ahler-Riemann foliations (for more details  on  this topic  see
for instance  \cite{D}).

\medskip

\begin{definition} Let $(M,g)$ be a compact Riemannian manifold. A \emph{K\"ahler foliation} on $M$  is a complex foliation $(\fo,J)$   endowed with
a real $2$-form $\omega\in\Omega^{0,1,1}$ such that
$$
d_\fo \, \omega=0\,,\quad
\omega(X,Y)=g(JX, Y)\,.
$$
for any $X,Y\in \Gamma(T\fo)$.\\
If the metric $g$ is bundle-like with respect to $\fo$, then $(\fo,J,\omega)$ is called a K\"ahler-Riemann foliation.\\
\end{definition}

For  a K\"ahler-Riemann foliation $\fo$ one has  the following  K\"ahler identities
\begin{equation}
\label{Kidentities}
\Delta_\fo=2\square_\fo=2\overline{\square}_\fo.
\end{equation}

Moreover, for the cohomology groups $\overline{H}\,^{v}_{d_\fo} =\ker \Delta_\fo \cap \Omega^{0,v}$ we get the splitting
\begin{equation}
\label{Huv}
\overline{H}\,^{v}_{d_\fo}=\bigoplus_{r+s=v} \overline{H}\,^{r,s}_{\ovp_\fo},
\end{equation}
where
$$
\overline{H}^{r,s}_{\ovp_\fo}:=\ker\overline{\square}_\fo\cap\Omega^{0,r,s}.
$$
In particular if $\overline{H}\,^{v}_{d_\fo}$ is finite dimensional, then $\overline{H}\,^{r,s}_{\ovp_\fo}$ have finite dimension
for any $r,s$ such that $r + s = v$.

\section{Cosymplectic structures}

Cosymplectic structures have been introduced as an odd-dimensional analogous of K\" ahler manifolds and they can be interpreted in terms of K\" ahler-Riemann foliations.

Let start to recall the definition of a normal  almost  contact  metric structure.

\begin{definition}
Let $M$ be a $(2n+1)$-dimensional manifold. An \emph{almost contact metric} structure on $M$ consists of a quadruple $(J,\xi,\alpha,g)$, where
$J$ is an endomorphism of $TM$, $\xi$  is a tangent vector field,  $\alpha$ is a $1$-form and  $g$ is a Riemannian metric on $M$ satisfying the conditions
$$
J^2=-{\rm Id}+\alpha\otimes \xi\,,\quad \alpha(\xi)=1\,,
$$
$$
g(JX,JY)=g(X,Y)-\alpha(X)\alpha(Y), \quad X,Y\in\Gamma(TM).
$$
An almost contact metric structure $(J,\xi,\alpha,g)$ is said to be \emph{normal} if the  endomorphism $J$ satisfies
$$
N_J(X,Y)=2d\alpha(X,Y) \xi\,,\mbox{ for any }X,Y\in\Gamma(TM),
$$
where, in this case, $N_J$ is the Nijenhuis tensor defined by
$$
N_J(X,Y)=[JX,JY]-J[JX,Y]-J[X,JY]+J^2[X,Y]\,.
$$
\end{definition}

\smallskip

An almost contact metric structure is called \emph{almost cosymplectic} if $d\alpha=0$, $d\eta=0$.
Almost cosymplectic structures with K\"ahler leaves were studied in \cite{D2}.

Let $(M,J,\xi,\alpha,g)$ be an almost contact metric manifold such that $d\alpha=0$, then
$\fo=\ker\alpha$ is a smooth foliation on $M$ and the endomorphism  $J$ induces
an almost complex structure on $\fo$ which is integrable if and only if the almost contact metric structure $(J,\xi,\alpha,g)$ is normal. Therefore if $(M,J,\xi,\alpha,g)$ is a normal  almost contact metric manifold such that $d\alpha=0$, then $\ker\alpha$ is a complex  foliation on $M$ and the integral curves of $\xi$ are geodesics, and, thus, $g$ is a bundle-like metric on $M$.

Moreover, for the operators  $\square_\fo$, $\overline \square_\fo$ defined by \eqref{box} and  for the map  $\Delta_{\perp} = \delta_{-1,0} \, d_{1,0} + d_{1,0} \,  \delta_{-1,0}$ we can prove
the following
\begin{prop}\label{elliptic}
Let $(M,J,\xi,\alpha,g)$ be a  compact normal almost contact metric manifold  such that $d\alpha=0$
and let $\fo=\ker\alpha$. Then the leafwise differential operators
$$
\begin{array} {l}
\delta_{-1,0} \, d_{1,0}+\square_\fo\colon \Omega^{0,r,s}\to\Omega^{0,r,s},\\[4pt]
\delta_{-1,0} \, d_{1,0}+\overline{\square}_\fo\colon \Omega^{0,r,s}\to\Omega^{0,r,s}
\end{array}
$$
are strongly elliptic and self-adjoint.
\end{prop}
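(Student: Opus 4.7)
The plan is to prove both assertions via a principal-symbol computation, exploiting crucially that $\fo=\ker\alpha$ has codimension one. Since the normal bundle $T\fo^\perp$ is a real line bundle spanned by $\xi$ and its dual is generated by $\alpha$, the bigraded spaces in \eqref{forme} vanish for $u\geq 2$. Consequently the triple decomposition of $d$ collapses to $d=d_{0,1}+d_{1,0}$, and at the level of principal symbols $d_{1,0}$ sends $\Omega^{0,r,s}$ into $\alpha\wedge\Omega^{0,r,s}=\Omega^{1,r,s}$, while its formal adjoint $\delta_{-1,0}$ brings $\Omega^{1,r,s}$ back to $\Omega^{0,r,s}$.

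Fix $x\in M$ and a non-zero covector $\eta\in T_x^*M$, decomposed as $\eta=a\alpha_x+\eta^\fo$ with $a=\eta(\xi_x)\in\R$ and $\eta^\fo\in T_x\fo^*$. Since the principal symbol of $d$ at $\eta$ is exterior multiplication by $\eta$, its transverse part gives $\sigma_1(d_{1,0})(\eta)\phi=a\,\alpha\wedge\phi$ on $\phi\in\Omega^{0,r,s}$; taking the adjoint of this wedge yields
$$
\sigma_2(\delta_{-1,0}d_{1,0})(\eta)= a^2\,\mathrm{Id}=|\eta(\xi_x)|^2\,\mathrm{Id}
$$
on $\Omega^{0,r,s}$. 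On the other hand, the operators $\square_\fo$ and $\overline{\square}_\fo$ introduced in \eqref{box} are the leafwise Dolbeault Laplacians of $\fo$ and are therefore elliptic along the leaves; their principal symbols at $\eta$ depend only on $\eta^\fo$ and equal a positive multiple of $|\eta^\fo|^2\,\mathrm{Id}$ on $\Omega^{0,r,s}$.

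Combining the two contributions,
$$
\sigma_2(\delta_{-1,0}d_{1,0}+\square_\fo)(\eta)=\bigl(|\eta(\xi_x)|^2+c\,|\eta^\fo|^2\bigr)\,\mathrm{Id}\;\geq\; c'\,|\eta|^2\,\mathrm{Id}
$$
for suitable positive constants $c,c'$, and the same estimate holds with $\overline{\square}_\fo$ in place of $\square_\fo$. This is exactly the strong ellipticity asserted. Self-adjointness is then immediate from the definitions: $\delta_{-1,0}$ is the formal adjoint of $d_{1,0}$, so $\delta_{-1,0}d_{1,0}=d_{1,0}^{*}d_{1,0}$ is self-adjoint (and nonnegative), while by \eqref{box} each of $\square_\fo$ and $\overline{\square}_\fo$ is manifestly of the form $TT^*+T^*T$.

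The main point requiring care is the symbol claim for $d_{1,0}$: although this operator differentiates in the genuinely transverse direction $\xi$, so the leafwise complex structure may a priori vary in that direction, any such variation only contributes to lower-order terms. Hence at top order $d_{1,0}$ preserves the leafwise bidegree $(r,s)$, which is precisely what is needed in order to compose with $\delta_{-1,0}$ and land back in $\Omega^{0,r,s}$ as stated; since ellipticity is determined by the principal symbol, the argument above is unaffected.
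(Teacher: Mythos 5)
Your proof is correct and takes essentially the same route as the paper: the authors also reduce strong ellipticity to the observation that in adapted coordinates $\{x,z^1,\dots,z^n\}$ with $\xi=\partial/\partial x$ the leading term is $-\partial^2/\partial x^2-\sum g^{\overline{\mu}\beta}\,\partial^2/\partial z^{\beta}\partial z^{\overline{\mu}}$, which is precisely your symbol $|\eta(\xi)|^2+c\,|\eta^{\fo}|^2$ written out in coordinates. Their self-adjointness argument is likewise the same observation you make, namely that $\delta_{-1,0}$ annihilates $\Omega^{0,v}$ so that $\delta_{-1,0}d_{1,0}$ agrees there with $\Delta_\perp=d_{1,0}^{*}d_{1,0}$, while $\square_\fo$ and $\overline{\square}_\fo$ are self-adjoint by construction.
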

\begin{proof}
First of all we note that $\delta_{-1,0}$ does not act on $\O^{0,v}$. Hence
the operator $\delta_{-1,0} \, d_{1,0}$  acts on $\O^{0,v}$ as  $\Delta_{\bot}$ and consequently
$\delta_{-1,0} \, d_{1,0}+\square_\fo$ and  $\delta_{-1,0} \, d_{1,0}+ \overline{\square}_\fo$ are  both self-adjoint on $\O^{0,v}$.

Furthermore, we can find  a system $\{x,z^1,\dots,z^n\}$ of local coordinates around each point $p$ of $M$  such that
$$
\xi=\partial/\partial x
$$
and $\{z^1,\dots,z^n\}$ are holomorphic coordinates for the leaf passing trough $p$. Then,  we get the following  local expressions
\begin{eqnarray}
&&\delta_{-1, 0} d_{1,0} +\square_\fo=-\frac{\partial^2}{\partial
x^2}-\sum_{\overline{\mu},\beta=1}^n
g^{\overline{\mu}\beta}\frac{\partial^2}{\partial z^{\beta}\partial
z^{\overline{\mu}}}
+\mbox{ lower order terms}\,,\\
&&\delta_{-1, 0} d_{1,0} +\overline{\square}_\fo=-\frac{\partial^2}{\partial
x^2}-\sum_{\overline{\mu},\beta=1}^n
g^{\overline{\mu}\beta}\frac{\partial^2}{\partial z^{\beta}\partial
z^{\overline{\mu}}} +\mbox{ lower order terms}\,,
\end{eqnarray}
where $g^{\overline {\mu} \beta}$ denotes the inverse of the matrix representing the restriction of the metric $g$ to the leaf passing trough $p$. From the previous local formulae it follows that  the operators $\delta_{-1, 0} d_{1,0} + \square_{\fo}$ and $\delta_{-1, 0} d_{1,0} + \overline{\square}_{\fo}$ are strongly elliptic.
\end{proof}

In general the groups $\overline{H}\,^{v}_{d_\fo}$  introduced in the previous section are infinite dimensional. We will show that they are  finite dimensional for this special  type of normal almost contact  structures. Indeed, we have the following

\begin{prop}\label{normal}
Let $(M,J,\xi,\alpha,g)$ be a  $(2n + 1)$-dimensional compact normal almost contact metric manifold satisfying $d\alpha=0$ and let $\fo=\ker\alpha$.
Then the cohomology groups $\overline{H}^{v}_{d_\fo}$ have finite dimension for any $v=0,..,2n$.
\end{prop}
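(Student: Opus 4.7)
The plan is to identify $\overline{H}^v_{d_\fo}$ with the kernel of a strongly elliptic, self-adjoint operator on the compact manifold $M$, so that finite-dimensionality will follow from classical Fredholm theory.

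To begin, the leafwise Hodge decomposition recalled at the end of Section~2 gives
$$
\overline{H}^v_{d_\fo}=\ker\Delta_\fo\cap\Omega^{0,v}=\ker d_\fo\cap\ker\delta_\fo\cap\Omega^{0,v}.
$$
The integrable leafwise complex structure supplied by normality together with $d\alpha=0$ yields the bidegree splittings $d_\fo=\partial_\fo+\ovp_\fo$ and $\delta_\fo=\vartheta_\fo+\bar\vartheta_\fo$, together with the refined grading $\Omega^{0,v}=\bigoplus_{r+s=v}\Omega^{0,r,s}$. A bidegree argument then shows that a bihomogeneous form $\phi\in\Omega^{0,r,s}$ is leafwise harmonic iff $\partial_\fo\phi=\ovp_\fo\phi=\vartheta_\fo\phi=\bar\vartheta_\fo\phi=0$, and in particular $\phi\in\ker\overline{\square}_\fo$. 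It therefore suffices to prove that $\ker\overline{\square}_\fo\cap\Omega^{0,r,s}$ has finite dimension for each pair $(r,s)$.

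Next I would apply Proposition~\ref{elliptic} to the operator
$$
P:=\delta_{-1,0}\,d_{1,0}+\overline{\square}_\fo\colon\Omega^{0,r,s}\longrightarrow\Omega^{0,r,s},
$$
which is strongly elliptic and self-adjoint on the compact manifold $M$, so that $\ker P$ is finite-dimensional by standard Fredholm theory on compact manifolds. Since both summands are non-negative self-adjoint, $\ker P=\ker d_{1,0}\cap\ker\overline{\square}_\fo$.

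The central and most delicate step is to establish the inclusion $\ker\overline{\square}_\fo\cap\Omega^{0,r,s}\subseteq\ker d_{1,0}$, which combined with the previous step would give a finite-dimensional embedding into $\ker P$. Since $\fo=\ker\alpha$ has codimension one, the component $d_{2,-1}$ of the de Rham differential vanishes identically; a local computation in adapted coordinates $\{x,z^1,\dots,z^n\}$ with $\xi=\partial/\partial x$ then gives $d_{1,0}\phi=\alpha\wedge\mathcal{L}_\xi\phi$ on $\Omega^{0,v}$, so the claim reduces to the transverse invariance $\mathcal{L}_\xi\phi=0$. To obtain this I would exploit that, under normality and $d\alpha=0$, the Reeb field $\xi$ is Killing and preserves $J$, so $\mathcal{L}_\xi$ commutes with $\overline{\square}_\fo$ and is skew-adjoint on $L^2\Omega^{0,r,s}$; together with the closed-range property granted by Theorem~\ref{ALK2}, this should exclude any nontrivial transverse Fourier mode in $\ker\overline{\square}_\fo$. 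This transverse-invariance step, converting the commutation and skew-adjointness of $\mathcal{L}_\xi$ into actual vanishing on the $\overline{\square}_\fo$-harmonic space, is where I expect the main difficulty to lie.
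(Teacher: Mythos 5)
Your overall strategy is genuinely different from the paper's, which simply sends $\phi\mapsto\alpha\wedge\phi$ and checks, using $*_\fo\phi=*(\phi\wedge\alpha)$ and the closedness of $\alpha$, that leafwise harmonicity of $\phi$ corresponds to harmonicity of $\alpha\wedge\phi$ for the full Laplacian of the compact manifold $M$; finite-dimensionality then follows from classical Hodge theory, with no bidegree decomposition and no appeal to Proposition \ref{elliptic}. Unfortunately your route breaks down precisely at the step you flag as delicate: the inclusion $\ker\overline{\square}_\fo\cap\Omega^{0,r,s}\subseteq\ker d_{1,0}$ is false. Take $M=\mathbb{T}^{2n}\times S^1$ with its standard flat cosymplectic structure, $\xi=\partial/\partial x$ tangent to the $S^1$-factor, and $\phi=f(x)\in\Omega^{0,0,0}$ a non-constant function of $x$ alone. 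Then $\phi$ is constant on every leaf, so $\ovp_\fo\phi=\vartheta_\fo\phi=0$ and $\overline{\square}_\fo\phi=0$, while $d_{1,0}\phi=f'(x)\,\alpha\neq0$. The same example shows that your intermediate target is unattainable: $\ker\overline{\square}_\fo\cap\Omega^{0,0,0}$ already contains every function of $x$ and is infinite-dimensional. The heuristic you propose cannot repair this: skew-adjointness of $\mathcal{L}_\xi$ and commutation with $\overline{\square}_\fo$ only show that $\mathcal{L}_\xi$ preserves $\ker\overline{\square}_\fo$, not that it vanishes there (on the torus $\mathcal{L}_\xi=\partial_x$ acts on that kernel with its full spectrum). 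Any operator whose kernel is to be finite-dimensional must control the transverse derivative; you correctly sensed this when introducing $\delta_{-1,0}\,d_{1,0}+\overline{\square}_\fo$, but passing back to $\ker\overline{\square}_\fo$ alone discards exactly that control.

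There is also a gap in your very first reduction: from $d_\fo\phi=\delta_\fo\phi=0$ for $\phi\in\Omega^{0,v}$ one cannot conclude that each bihomogeneous component $\phi^{r,s}$ satisfies the four equations, because the bidegree-$(r,s+1)$ part of $d_\fo\phi$ mixes $\ovp_\fo\phi^{r,s}$ with $\partial_\fo\phi^{r-1,s+1}$. The splitting \eqref{Huv} that would justify restricting to bihomogeneous leafwise-harmonic forms rests on the K\"ahler identities \eqref{Kidentities}, hence on $d_\fo\omega=0$, which is not among the hypotheses of the proposition (only normality and $d\alpha=0$ are assumed). The paper's argument avoids both difficulties by never decomposing in bidegree at all.
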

\begin{proof}
Let $\Theta\colon \Omega^{0,v}\to\Omega^{1,v}$ be the injective map
$$
\phi\longmapsto\alpha\wedge\phi\,.
$$
In order to prove the proposition, it is sufficient to show that $\Theta$ induces an injective map
between $\overline{H}\,^{v}_{d_\fo}$  and the de Rham cohomology group $H^{v+1}(M)$.\\
Indeed,
it is sufficient to prove that $\Theta$ takes $\Delta_\fo$-harmonic forms to $\Delta$-harmonic forms.
Let $\phi\in \overline{H}\,^{0,v}_{d_\fo}$. Then $\phi\in\Omega^{0,v}$ and satisfies the conditions
$$
\begin{cases}
d_\fo \, \phi=0,\\
\delta_\fo \, \phi=0\,,
\end{cases}
$$
which are equivalent to
$$
\begin{cases}
d\phi\wedge\alpha=0,\\
d*_\fo\phi=0\,,
\end{cases}
$$
where $*_\fo$ is the star Hodge operator on $\oplus_{v}\Omega^{0,v}$.
Since $\alpha$ is closed and $\phi$ belongs to $\Omega^{0,v}$, we have
$$
d\phi\wedge\alpha=d(\phi\wedge\alpha),\quad
*_\fo\phi=*(\phi\wedge\alpha)\,,
$$
being $*$ the Hodge star operator associated to $g$.
Hence,
$$
\Delta_\fo\phi=0\iff \Delta(\phi\wedge\alpha)=0
$$
which ends the proof.
\end{proof}

\noindent Let $(M,J,\xi,\alpha,g)$ be an almost contact metric manifold. Then we can define the fundamental form on $M$ as the  $2$-form given by
$$
\omega(\cdot,\cdot):=g(J\cdot,\cdot)\,.
$$
The $2$-form $\omega$ is $J$-invariant  and satisfies the condition
$\alpha\wedge\omega^n\neq 0$ everywhere.

\begin{definition}
A normal almost contact metric structure  $(J,\xi,\alpha,g)$ is said to be a \emph{cosymplectic structure} if the pair $(\alpha,\omega)$
satisfies
$$
d\alpha=0\,,\quad d\omega=0\,.
$$
\end{definition}

In terms of smooth foliations  note that if $(M,J,\xi,\alpha,g)$ is a cosymplectic manifold, then the pair $(\ker\alpha,\omega)$ defines a K\"ahler-Riemann foliation on $M$ (see also \cite{Kim-Pak}). Therefore, the results of the previous sections about K\"ahler-Riemann foliations can be applied.

\section{Infinitesimal deformations of cosymplectic structures}

The aim of this section is to prove Theorem \ref{main}, which is the analogous of Kodaira-Morrow theorem \cite{KM} for K\"ahler manifolds.

In order to prove  the theorem, we  review some properties about elliptic operators (for  more details see \cite{KM}
and the  references therein).\\
Let $M$ be a compact and oriented  manifold, $\mathcal{B}$ be a complex vector bundle on $M\times(-\lambda,\lambda)$
and $B_t:=\mathcal{B}_{|M\times\{t\}}$, with $t \in (-\lambda,\lambda)$. Then $\{ B_t \}$ is a  family of complex vector bundles on $M$.

Let $\psi_t\in\Gamma(B_t)$ be a family of smooth sections of $B_t$.
Then  we say that $\psi_t$  \emph{depends differentiably on $t$} if there exists a section $\psi$ of $\mathcal{B}$ such that
$$
\psi_t=\psi_{|M\times \{ t \}}\,.
$$

 Assume that every $B_t$ is
equipped with a
Hermitian metric $h_t$ on the fibres such that the family $\{h_t\}$  depends differentiably on  $t$. We recall that   a  family of linear operators $A_t\colon\Gamma(B_t)\to\Gamma(B_t)$
\emph{depends differentiably} on $t$ if the following property holds:
if $\psi_t\in\Gamma(B_t)$ depends differentiably on $t$, then also
$A_t\psi_t$ depends differentiably on $t$.

Consider  now a family of strongly  elliptic self-adjoint operators
$$
E_{t}\colon \Gamma(B_t)\to\Gamma(B_t)
$$
 depending differentiably  on $t$.

 We recall the following
\begin{theorem}  \cite[Theorem 4.3]{KM} \label{findim} Let $\mathbb{F}_t$ be  the kernel of $E_t$.
Then, the map $t\mapsto \dim \mathbb{F}_t$ is upper semicontinuous, i.e.  given a $t_0$ there exists a sufficiently small $ \epsilon$ such that $\dim \, {\mathbb{F}}_t  \leq \dim  {\mathbb{F}}_{t_0}$ for $\vert t - t_0 \vert < \epsilon$.
\end{theorem}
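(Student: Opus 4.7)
The plan is to prove upper semicontinuity via the spectral theory of strongly elliptic self-adjoint operators, together with holomorphic (Riesz) functional calculus. First, using the differentiable structure of $\mathcal{B}\to M\times(-\lambda,\lambda)$, I would locally trivialize the family $\{B_t\}$ over a neighborhood of $t_0$, reducing to the case where every $E_t$ acts on the sections of a single Hermitian vector bundle $B_{t_0}$, with coefficients and metric depending smoothly on $t$. Since each $E_t$ is strongly elliptic and self-adjoint on the compact manifold $M$, its spectrum is a discrete subset of $\mathbb{R}$ consisting of eigenvalues of finite multiplicity accumulating only at $+\infty$; in particular $\mathbb{F}_t=\ker E_t$ is finite dimensional and coincides with the eigenspace of the eigenvalue $0$.

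Set $d=\dim\mathbb{F}_{t_0}$. Since the nonzero spectrum of $E_{t_0}$ is discrete, I would choose $\delta>0$ so small that $0$ is the unique eigenvalue of $E_{t_0}$ in $[-\delta,\delta]$, and such that $\pm\delta$ lie in its resolvent set. I then form the Riesz spectral projector
$$
P^{\delta}_{t}\;=\;\frac{1}{2\pi i}\oint_{\gamma}(z-E_t)^{-1}\,dz,
$$
where $\gamma$ is a small closed curve in $\mathbb{C}$ enclosing $[-\delta,\delta]$ and disjoint from the spectrum of $E_{t_0}$. By construction $P^{\delta}_{t_0}$ is the orthogonal projection onto $\mathbb{F}_{t_0}$, so $\mathrm{rank}\,P^{\delta}_{t_0}=d$.

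The central step is to show that $P^{\delta}_t$ depends continuously on $t$ in the operator norm for $t$ near $t_0$. From the differentiable dependence of $E_t$ on $t$ combined with standard elliptic estimates (a G\aa rding-type inequality with constants uniform in $t$), one deduces that for $|t-t_0|$ small the curve $\gamma$ remains in the resolvent set of $E_t$ and that $(z-E_t)^{-1}$ varies continuously as a bounded operator on $L^2(B_{t_0})$, uniformly in $z\in\gamma$. Integrating over $\gamma$ yields the norm-continuity of $P^{\delta}_t$, and this in turn forces the integer $\mathrm{rank}\,P^{\delta}_t$ to be locally constant, hence equal to $d$ near $t_0$. Since the eigenvalue $0$ of $E_t$ (if present) lies in $[-\delta,\delta]$, one has $\mathbb{F}_t\subseteq\mathrm{range}\,P^{\delta}_t$, so $\dim\mathbb{F}_t\le d$, which is exactly the asserted upper semicontinuity.

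The main technical obstacle is the uniform control of the resolvents as $t$ varies. One must exploit the smooth dependence of the family $\{E_t\}$ to ensure both that $\gamma$ avoids the spectrum of every nearby $E_t$ and that the second-resolvent identity
$$
(z-E_t)^{-1}-(z-E_{t_0})^{-1}\;=\;(z-E_t)^{-1}(E_{t_0}-E_t)(z-E_{t_0})^{-1}
$$
forces operator-norm convergence as $t\to t_0$, uniformly in $z\in\gamma$. Once these uniform resolvent bounds are secured, the rank-stability of norm-continuous families of projectors, together with the elementary inclusion $\ker E_t\subseteq\mathrm{range}\,P^{\delta}_t$, closes the argument.
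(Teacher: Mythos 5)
Your argument is correct and complete in outline: reducing to a fixed bundle by local trivialization, invoking the discreteness of the spectrum of a strongly elliptic self-adjoint operator on a compact manifold, and then using the Riesz projector $P^{\delta}_t$ together with uniform resolvent bounds (via the second resolvent identity and elliptic estimates with constants uniform in $t$) to get local constancy of $\mathrm{rank}\,P^{\delta}_t$ and hence $\dim\mathbb{F}_t\le\dim\mathbb{F}_{t_0}$. The paper offers no proof of this statement --- it is quoted directly from Kodaira--Morrow \cite{KM} --- and their argument (continuity in $t$ of the ordered eigenvalues of the family, established through a G{\aa}rding inequality uniform in $t$) is the same spectral-perturbation idea that you have packaged through the contour integral, so your route is essentially the standard one.
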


If we denote by
$$
F_t\colon \Gamma(B_t)\to\mathbb{F}_t
$$
the orthogonal projection with respect to $h_t$ and  by
$$
G_t\colon\Gamma(B_t)\to\Gamma(B_t)
$$
the Green operator associated to $E_t$, then we have the following
\begin{theorem} \cite[Theorem 4.5]{KM}
If  the dimension of $\mathbb{F}_t$ is independent on $t$,  then for $\vert t\vert$ sufficient small,
$F_t$ and $G_t$ depend differentiably on $t$.
\end{theorem}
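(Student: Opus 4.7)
The plan is to construct both $F_t$ and $G_t$ from the resolvent $(zI - E_t)^{-1}$ via contour integrals in the spectral plane, reducing everything to the differentiable dependence of the resolvent on $t$. The crucial input is the constancy of $\dim \mathbb{F}_t = p$: since each $E_t$ is strongly elliptic and self-adjoint on the compact manifold $M$, its spectrum is discrete, real and bounded below, so $0$ is an isolated eigenvalue of $E_{t_0}$. I would choose a small circle $\gamma \subset \mathbb{C}$ centred at $0$ containing no other eigenvalue of $E_{t_0}$; by continuity of the spectrum under small perturbations, $\gamma$ stays in the resolvent set of $E_t$ for $|t - t_0|$ sufficiently small, and the total multiplicity of eigenvalues enclosed by $\gamma$ equals $p$. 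The constancy hypothesis then forces all the enclosed eigenvalues to remain at $0$: no small non-zero eigenvalue can split off the origin (this is the point where Theorem \ref{findim}, sharpened by the equality $\dim \mathbb{F}_t = \dim \mathbb{F}_{t_0}$, is really used).

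From the Riesz projection formula
$$
F_t \;=\; \frac{1}{2\pi i} \oint_{\gamma} (zI - E_t)^{-1} \, dz,
$$
the differentiable dependence of $F_t$ on $t$ reduces to that of the resolvent, uniformly in $z \in \gamma$. For the Green operator I would exploit the identity $G_t = (E_t + F_t)^{-1} - F_t$. To justify it, note that $E_t + F_t$ acts as the identity on $\mathbb{F}_t$ and as $E_t$ on $\mathbb{F}_t^{\perp}$ (both invariant under $E_t$ by self-adjointness), so it is a self-adjoint Fredholm operator of index zero with trivial kernel, hence invertible; the direct computation $(E_t + F_t)(G_t + F_t) = (I - F_t) + F_t = I$ then confirms the identity. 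Since $F_t$ depends differentiably on $t$ and $(E_t + F_t)^{-1}$ does too (by a Neumann expansion about $(E_{t_0} + F_{t_0})^{-1}$, noting that $E_t - E_{t_0}$ is small and $F_t - F_{t_0}$ is finite rank smooth), subtraction yields the differentiable dependence of $G_t$.

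The main technical obstacle is making the differentiable dependence of $(zI - E_t)^{-1}$ rigorous at the level of the Fr\'echet space $\Gamma(B_t)$, rather than merely at operator-norm level. The cleanest route is to first trivialise the family $\mathcal{B} \to M \times (-\lambda, \lambda)$ near $t_0$ so that all sections live in a common model space, then work on the Sobolev completions on which the resolvent is a bounded operator varying differentiably in $t$, and finally use elliptic regularity together with uniform Sobolev bounds on the resolvent to transfer differentiability from the Sobolev scale back to smooth sections. This last transfer is the delicate step, since one must produce estimates with constants independent of $t$ in a whole neighbourhood of $t_0$.
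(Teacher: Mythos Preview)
The paper does not prove this theorem: it is quoted verbatim as \cite[Theorem 4.5]{KM} and used as a black box in the proof of Proposition~\ref{mainprop} and of Theorem~\ref{main}. There is therefore no ``paper's own proof'' to compare your proposal against.

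For what it is worth, your sketch follows exactly the standard strategy that Kodaira and Morrow themselves use in \cite{KM}: one realises $F_t$ as a Riesz contour integral of the resolvent around a small circle isolating the eigenvalue $0$, uses the constancy of $\dim\mathbb{F}_t$ to prevent eigenvalues from drifting off the origin, and then recovers $G_t$ from $F_t$ and $E_t$. The only point where you should be a little more careful is the passage from norm-differentiability on fixed Sobolev spaces to differentiability in the $C^\infty$ sense required by the definition in the paper (``if $\psi_t$ depends differentiably on $t$ then so does $A_t\psi_t$''); this is handled in \cite{KM} by uniform a priori elliptic estimates, exactly as you indicate, so your outline is sound.
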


 Now we  apply the previous results in the context of cosymplectic structures   in order to prove  Theorem \ref{main}.

Let $(J,\xi,\alpha,g)$ be a cosymplectic structure on a compact manifold $M$ and let $\{J_t\}_{t\in(-\lambda,\lambda)}$ be a
differentiable  family of endomorphisms of $TM$ satisfying $$J_t^2=-{\rm Id}+\alpha\otimes \xi, \, N_{J_t} =0, \, J_0=J.$$
The Riemannian  metric
$$
\widetilde{g}_t(\cdot,\cdot):=\frac12\left( g(\cdot,\cdot)+g(J_t\cdot,J_t\cdot) \right)\,,
$$
is compatible with $J_t$ and such that
$$
\widetilde{g}_t(J_t\cdot,J_t\cdot)=\widetilde{g}_t(\cdot,\cdot)-\alpha(\cdot)\alpha(\cdot)\,.
$$
Therefore,  $(J_t,\xi,\alpha, \tilde g_t)$ defines a normal almost contact metric  structure on $M$ for any $t$.

Moreover, by using the fact that $\fo_t =(\ker \, \alpha, J_t)$ defines a complex foliation on $M$ for any $t$,  and  by using  the decomposition  \eqref{omegaurs} for the complex de Rham algebra, we get the following splitting
$$
\Omega^{u,v} = \bigoplus_{r + s = v} \Omega_t^{u,r,s},
$$
where
by $\Omega_t^{u,r,s}$ we denote the vector space of complex forms in $\Omega^{u,r+s}$ which are of type $(u,r,s)$ with respect to $J_t$.

Therefore for the differential $d_{{\fo}_t}$ we have the natural decomposition
$$
d_{{\fo}_t} = \partial_{{\fo}_t}  + \ovp_{{\fo}_t}.
$$
In the sequel we will denote $\partial_{{\fo}_t}$ and $\ovp_{{\fo}_t}$ respectively by $\partial_{t}$ and $\ovp_{t}$.

Consider  the family of linear operators
$$
\widetilde{E}_t\colon\Omega_{t}^{0,r,s}\to \Omega_{t}^{0,r,s}
$$
defined by
\begin{equation}\label{Et}
\widetilde{E}_t=\partial_t\ovp_t\vartheta_t\overline{\vartheta}_t+\vartheta_t\overline{\vartheta}_t\partial_t\ovp_t
+\vartheta_t\partial_t\overline{\vartheta}_t\ovp_t+\overline{\vartheta}_t\ovp_t\vartheta_t\partial_t+\vartheta_t\ovp_t+\overline{\vartheta}_t
\partial_t,
\end{equation}
where $\overline \vartheta_t$ and $\vartheta_t$ are the $\tilde g_t$-adjoints respectively of $\partial_t$ and $\overline \partial_t$.
 Note that  $\widetilde{E}_t$ is self-adjoint with respect to the Hermitian metric $\tilde{g}_t$.

Adapting  the proof of  Proposition 4.3  of \cite{KM}  for our case we can prove

\begin{prop}\label{in0}\emph{
$\widetilde{E}_0=\square_0\square_0+\vartheta_0\ovp_0+\overline{\vartheta}_0\partial_0$ and $\widetilde{E_t}\phi=0$ if and only if
$$
\vartheta_t \overline \vartheta_t\phi=\ovp_t\phi=\partial_t\phi=0\,.
$$
}
\end{prop}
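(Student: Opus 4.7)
The plan is to split the proof into two parts: first to compute $\widetilde{E}_0$ by exploiting the K\"ahler-Riemann foliation structure available at $t=0$, and then to obtain the equivalence for arbitrary $t$ by recognising each summand of $\widetilde{E}_t$ as a nonnegative operator of the form $A^{\ast}A$.

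For the first assertion, since at $t=0$ the pair $(\ker\alpha,\omega)$ is a K\"ahler-Riemann foliation on $M$, the identity \eqref{Kidentities} gives $\square_0=\overline{\square}_0$; the standard leafwise K\"ahler-identity argument then produces the anticommutation relations
$$
\{\partial_0,\ovp_0\}=\{\partial_0,\vartheta_0\}=\{\ovp_0,\overline{\vartheta}_0\}=0,\qquad \{\partial_0,\overline{\vartheta}_0\}=\{\ovp_0,\vartheta_0\}=\square_0,
$$
together with $\partial_0^{2}=\ovp_0^{2}=0$, the adjoint versions of these identities, and the commutation of $\square_0$ with each of $\partial_0,\ovp_0,\vartheta_0,\overline{\vartheta}_0$. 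Setting $A=\partial_0$, $B=\ovp_0$, $A^{\ast}=\overline{\vartheta}_0$, $B^{\ast}=\vartheta_0$, a direct manipulation of the four quartic summands of $\widetilde{E}_0$ — using $BB^{\ast}=\square_0-B^{\ast}B$, $A^{\ast}A=\square_0-AA^{\ast}$, and anticommuting $A,A^{\ast}$ past $B,B^{\ast}$ — collapses them to $\square_0(AA^{\ast}+A^{\ast}A)=\square_0^{2}$. Adding the two quadratic summands then yields $\widetilde{E}_0=\square_0^{2}+\vartheta_0\ovp_0+\overline{\vartheta}_0\partial_0$.

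For the equivalence, taking adjoints gives $(\vartheta_t\overline{\vartheta}_t)^{\ast}=\partial_t\ovp_t$ and $(\overline{\vartheta}_t\ovp_t)^{\ast}=\vartheta_t\partial_t$, so that each of the six terms appearing in \eqref{Et} has the form $A^{\ast}A$. Pairing $\widetilde{E}_t\phi$ against $\phi$ in the leafwise $L^{2}$-inner product associated to $\widetilde{g}_t$ produces
$$
\langle\widetilde{E}_t\phi,\phi\rangle=\|\vartheta_t\overline{\vartheta}_t\phi\|^{2}+\|\partial_t\ovp_t\phi\|^{2}+\|\overline{\vartheta}_t\ovp_t\phi\|^{2}+\|\vartheta_t\partial_t\phi\|^{2}+\|\ovp_t\phi\|^{2}+\|\partial_t\phi\|^{2},
$$
so $\widetilde{E}_t\phi=0$ forces every summand on the right-hand side to vanish; in particular $\partial_t\phi=\ovp_t\phi=0$ and $\vartheta_t\overline{\vartheta}_t\phi=0$. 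Conversely, if these three equalities hold, then each of the six terms defining $\widetilde{E}_t$ annihilates $\phi$, hence $\widetilde{E}_t\phi=0$.

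The main obstacle is the algebraic bookkeeping in the first part: one must verify that the leafwise K\"ahler identities on a K\"ahler-Riemann foliation genuinely supply the full set of anticommutation relations listed above, and then track signs carefully through the four fourfold products. Once those relations are in hand, both assertions reduce to routine formal manipulations.
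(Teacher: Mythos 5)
Your proposal is correct and follows essentially the same route as the paper, which gives no details of its own but simply invokes the argument of Kodaira--Morrow (Proposition 4.3 of \cite{KM}): the identity $\widetilde{E}_0=\square_0\square_0+\vartheta_0\ovp_0+\overline{\vartheta}_0\partial_0$ comes from the leafwise K\"ahler anticommutation relations (valid here because the bundle-like metric makes $\vartheta_0,\overline{\vartheta}_0$ genuinely leafwise operators, so the classical identities apply leaf by leaf), and the kernel characterization comes from writing each summand of $\widetilde{E}_t$ as $A^{*}A$ and pairing against $\phi$ in the $L^2$ inner product on the compact manifold $M$ --- exactly as the paper itself does for $E_t$ in the proof of Proposition \ref{deco}. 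Your computation of the four quartic terms collapsing to $\square_0^2$ checks out, so you have supplied correctly the details the paper leaves to the reader.
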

Now we introduce the new operators $E_{t}\colon \O^{0,u,v}_t \to \O^{0,u,v}_t$ defined by
$$
E_{t}=\widetilde{E}_t+\delta_{-1,0} \, d_{1,0} \, \delta_{-1,0} \, d_{1,0}+\delta_{-1,0} \, d_{1,0}\,.
$$
We have the following
\begin{prop}\label{deco}\emph{
The operator $E_t$ is  strongly elliptic  and self-adjoint.  A form
$\phi\in\Omega^{0,r,s}_t$ belongs to the space  $\mathbb{F}_t^{r,s}=\ker
E_t\cap\O_t^{0,r,s}$ if and only if
$$
\vartheta_t\overline{\vartheta}_t\phi=\partial_t\phi=\overline{\partial}_t\phi=d_{1,0}\phi=0\,.
$$
Moreover, $\phi\in{\mathbb{F}}_0^{r,s}$ if and only if
\begin{equation}\label{Et0}
\square_0\phi=d_{1,0}\phi=0\,.
\end{equation}
}
\end{prop}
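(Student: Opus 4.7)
The plan is to express $\langle E_t\phi,\phi\rangle$ as a sum of squared norms, from which self-adjointness and the description of $\mathbb{F}_t^{r,s}$ will fall out simultaneously. Setting $A = \partial_t\ovp_t$ (so that $A^* = \vartheta_t\overline{\vartheta}_t$) and $B = \vartheta_t\partial_t$ (so that $B^* = \overline{\vartheta}_t\ovp_t$), the six terms of $\widetilde{E}_t$ in \eqref{Et} regroup as $AA^* + A^*A + BB^* + B^*B + \ovp_t^*\ovp_t + \partial_t^*\partial_t$; each summand is self-adjoint, as are $\delta_{-1,0}\, d_{1,0} = d_{1,0}^*\, d_{1,0}$ and its square, so $E_t$ is self-adjoint. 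Pairing with $\phi$ then gives
\begin{equation*}
\langle E_t\phi,\phi\rangle = \|A\phi\|^2 + \|A^*\phi\|^2 + \|B\phi\|^2 + \|B^*\phi\|^2 + \|\ovp_t\phi\|^2 + \|\partial_t\phi\|^2 + \|\delta_{-1,0}\, d_{1,0}\phi\|^2 + \|d_{1,0}\phi\|^2,
\end{equation*}
so $E_t\phi = 0$ is equivalent to the vanishing of each summand. The four conditions $\vartheta_t\overline{\vartheta}_t\phi = \partial_t\phi = \ovp_t\phi = d_{1,0}\phi = 0$ automatically entail the remaining ones, proving the first characterization. For $t = 0$, substituting the closed-form expression from Proposition \ref{in0} yields
\begin{equation*}
\langle E_0\phi,\phi\rangle = \|\square_0\phi\|^2 + \|\ovp_0\phi\|^2 + \|\partial_0\phi\|^2 + \|\delta_{-1,0}\, d_{1,0}\phi\|^2 + \|d_{1,0}\phi\|^2,
\end{equation*}
and the K\"ahler identities \eqref{Kidentities} (applicable since $\ker\alpha$ is K\"ahler-Riemann at $t=0$) allow $\square_0\phi = 0$ to absorb $\partial_0\phi = \ovp_0\phi = 0$, yielding \eqref{Et0}.

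For strong ellipticity I examine the principal symbol of $E_t$ in the local coordinates $(x,z^1,\ldots,z^n)$ of Proposition \ref{elliptic}, with $\xi = \partial/\partial x$ and $z^1,\ldots,z^n$ leafwise holomorphic around a point. The proof of Proposition \ref{elliptic} already shows that $\delta_{-1,0}\, d_{1,0}$ acts on $\Omega^{0,v}$ as $-\partial^2/\partial x^2$ up to lower order, so $(\delta_{-1,0}\, d_{1,0})^2$ contributes $\partial^4/\partial x^4$ at fourth order. The purely leafwise operator $\widetilde{E}_t$ has leading symbol equal to $\sigma(\square_t)^2$: at $t = 0$ this is immediate from Proposition \ref{in0}, and for general $t$ it follows because the commutators among the first-order operators $\partial_t,\ovp_t,\vartheta_t,\overline{\vartheta}_t$ have order at most one, so the various orderings in \eqref{Et} share the same order-four symbol. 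No cross terms arise in the sum since $\widetilde{E}_t$ carries no $\partial/\partial x$-derivatives and $(\delta_{-1,0}\, d_{1,0})^2$ no leafwise ones; hence at a covector $\eta = \xi_0\,dx + \zeta$ (with $\zeta$ leaf-cotangent) the principal symbol of $E_t$ is a positive scalar multiple of $\bigl(\xi_0^4 + \tfrac{1}{4}|\zeta|^4\bigr)\cdot\mathrm{Id}$, which is strictly positive for $\eta\neq 0$. This proves that $E_t$ is strongly elliptic of order four.

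The main technical obstacle is the identification, for $t\neq 0$, of the leafwise principal symbol of $\widetilde{E}_t$ with $\sigma(\square_t)^2$: unlike the $t=0$ case, no closed-form expression is available, and one must verify carefully that the four fourth-order summands in \eqref{Et} combine, modulo lower-order commutator corrections, into the required positive definite expression. Once this symbolic step is in hand, the rest of the argument is a clean consequence of the $AA^* + A^*A$ decomposition, of Proposition \ref{elliptic}, and of the K\"ahler identities \eqref{Kidentities} at $t=0$.
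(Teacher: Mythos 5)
Your proposal is correct and follows essentially the same route as the paper: strong ellipticity via the local/symbol computation in the adapted coordinates $(x,z^1,\dots,z^n)$ (giving the leading term $\partial^4/\partial x^4$ plus the square of the leafwise complex Laplacian symbol), the kernel characterization via the sum-of-squares identity for $\tilde g_t(E_t\phi,\phi)$, and the $t=0$ statement via the K\"ahler identities \eqref{Kidentities}. Your explicit $AA^*+A^*A+BB^*+B^*B$ regrouping and the flagged verification of $\sigma_4(\widetilde{E}_t)$ for $t\neq 0$ are exactly the content the paper compresses into ``by a direct computation,'' so there is no substantive difference.
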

\begin{proof}
Around any $p\in M$ we can find a system $\{x,z^1,\dots,z^n\}$ of local coordinates, such that $\xi=\partial/\partial x$ and
$\{z^1,\dots,z^n\}$ are holomorphic coordinates for the leaf passing
trough $p$.  By a direct computation one can show  that $E_t$ can be locally expressed
in terms of  previous  coordinates as
$$
{E}_{t}=\frac{\partial^4}{\partial x^4}+\sum_{i,j,k,l}\widetilde{g}_{t}^{\,\overline{j}i}
\widetilde{g}_{t}^{\,\overline{l}k}\frac{\partial^4}{\partial z^{i}
\partial \overline{z}^{j}\partial z^{k}\partial \overline{z}^{l}}+\mbox{lower order terms}\,.
$$
Hence ${E}_t$ is strongly elliptic. Moreover
$$
\begin{array} {lcl}
\widetilde{g}_t({E}_t \, \phi,\phi)&=&
\widetilde{g}_t(\partial_t\, \ovp_t \, \vartheta_t \, \overline{\vartheta}_t \, \phi,\phi)
+\widetilde{g}_t(\vartheta_t \, \overline{\vartheta}_t \, \partial_t \, \ovp_t \, \phi,\phi)
+ \widetilde{g}_t(\vartheta_t \, \partial_t \, \overline{\vartheta}_t \, \ovp_t \, \phi,\phi)\\[2pt]
&&+\widetilde{g}_t(\overline{\vartheta}_t \, \ovp_t \, \vartheta_t \, \partial_t \, \phi,\phi)
+\widetilde{g}_t(\vartheta_t \, \ovp_t \, \phi,\phi)
+\widetilde{g}_t(\overline{\vartheta}_t \, \partial_t \, \phi,\phi)\\[2pt]
&&+\widetilde{g}_t(\delta_{-1,0} \, d_{1,0} \, \delta_{-1,0} \, d_{1,0} \, \phi,\phi)
 + \widetilde{g}_t(\delta_{-1,0} \, d_{1,0} \, \phi,\phi)\\[4pt]
&=&
\widetilde{g}_t(\vartheta_t \, \overline{\vartheta}_t \, \phi,\vartheta_t \, \overline{\vartheta}_t \, \phi)+
\widetilde{g}_t(\partial_t \, \ovp_t \, \phi,\partial_t \, \ovp_t \, \phi)+
\widetilde{g}_t(\overline{\vartheta}_t \, \ovp_t \, \phi,\overline{\vartheta}_t \, \ovp_t \, \phi)\\[2pt]
&&+\widetilde{g}_t(\vartheta_t \, \partial_t \, \phi,\vartheta_t \, \partial_t \, \phi)
+\widetilde{g}_t(\ovp_t \, \phi,\ovp_t \, \phi)
+\widetilde{g}_t(\partial_t \, \phi,\partial_t \, \phi)\\[2pt]
&&+\widetilde{g}_t(d_{1,0} \, \delta_{-1,0} \, \phi, d_{1,0} \, \delta_{-1,0} \, \phi)
+\widetilde{g}_t(\delta_{-1,0}\, d_{1,0} \, \phi,\delta_{-1,0} \, d_{1,0} \,  \phi)\\[2pt]
&&+\widetilde{g}_t(d_{1,0}\phi,d_{1,0}\phi)\\[4pt]
&=&
\|\vartheta_t\overline{\vartheta}_t\phi\|_t^2
+\|\partial_t\ovp_t\phi\|_t^2
+\|\overline{\vartheta}_t\ovp_t\phi\|_t^2
+\|\vartheta_t\partial_t\phi\|_t^2
+\|\ovp_t\phi\|_t^2
\\[2pt]
&&
+\|\partial_t\phi\|_t^2+\|\delta_{-1,0} d_{1,0}\phi\|_t^2
+\|d_{1,0}\phi\|_t^2\,,
\end{array}
$$
which implies $E_t\phi=0$ if and only if $\vartheta_t\overline{\vartheta}_t\phi=\partial_t\phi=\overline{\partial}_t\phi=d_{1,0}\phi=0$.
Finally, the K\"ahler identities \eqref{Kidentities} imply the last part of the proposition.
\end{proof}
\noindent For the kernel $\mathbb{F}^{r,s}_t$ of $E_t$ we can show
\begin{prop} \label{kernelEt}
\emph{Let
 ${\rm
Z}^{r,s}_t:=\ker d\cap \Omega_t^{0,r,s}$. Then
$$
{\rm Z}^{r,s}_t=(\partial_t\ovp_t
\Omega_{t}^{0,r-1,s-1} \cap \ker d_{1,0})\oplus \mathbb{F}^{r,s}_t\,.
$$
}
\end{prop}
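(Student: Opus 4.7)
The plan is to verify the two inclusions and directness of the sum first, and then prove surjectivity via the Hodge decomposition of the operator $E_t$ combined with a Kodaira--Morrow-style iterated Hodge argument on the leafwise K\"ahler structure.

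For the inclusions, the key observation is that because $\fo=\ker\alpha$ has codimension one, the component $d_{2,-1}$ of the exterior derivative vanishes on $\Omega^{0,r,s}_t$, so
$$
{\rm Z}^{r,s}_t=\ker\partial_t\cap\ker\overline{\partial}_t\cap\ker d_{1,0}\cap\Omega^{0,r,s}_t.
$$
The inclusion $\mathbb{F}^{r,s}_t\subseteq{\rm Z}^{r,s}_t$ is then immediate from the characterization in Proposition \ref{deco}, while $\partial_t\overline{\partial}_t\Omega^{0,r-1,s-1}_t\cap\ker d_{1,0}\subseteq{\rm Z}^{r,s}_t$ follows from $\partial_t^2=\overline{\partial}_t^2=0$. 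Directness of the sum uses the adjoint relation $(\partial_t\overline{\partial}_t)^*=\vartheta_t\overline{\vartheta}_t$ and Proposition \ref{deco}: for any $\phi_0\in\mathbb{F}^{r,s}_t$ one has $\vartheta_t\overline{\vartheta}_t\phi_0=0$, whence $\langle\partial_t\overline{\partial}_t\eta,\phi_0\rangle=\langle\eta,\vartheta_t\overline{\vartheta}_t\phi_0\rangle=0$ for every $\eta$.

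For the non-trivial surjective direction, given $\phi\in{\rm Z}^{r,s}_t$, the strong ellipticity and self-adjointness of $E_t$ from Proposition \ref{deco} give the Hodge decomposition $\phi=F_t\phi+E_tG_t\phi$, with $F_t\phi\in\mathbb{F}^{r,s}_t$ the harmonic projection and $G_t$ the associated Green operator; thus $E_tG_t\phi\in\ker d\cap(\mathbb{F}^{r,s}_t)^\perp$. To express this remainder as a $\partial_t\overline{\partial}_t$-exact form, I would run the Kodaira--Morrow argument leafwise. First, decompose via $\overline{\square}_t$ (the corollary of Theorem \ref{ALK2}) as $E_tG_t\phi=h+\overline{\partial}_t\alpha+\overline{\vartheta}_t\beta$, and use $\overline{\partial}_t(E_tG_t\phi)=0$ to force $\overline{\vartheta}_t\beta=0$. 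Then decompose $\alpha=h_2+\partial_t\gamma+\vartheta_t\delta$ via $\square_t$, and apply the K\"ahler identities $\{\partial_t,\overline{\vartheta}_t\}=\{\overline{\partial}_t,\vartheta_t\}=0$ (which follow from \eqref{Kidentities}) together with the leafwise Dolbeault orthogonality $\mathrm{im}\,\vartheta_t\cap\ker\overline{\partial}_t=\{0\}$ to annihilate the term $\vartheta_t\overline{\partial}_t\delta$. The outcome is
$$
E_tG_t\phi=h-\partial_t\overline{\partial}_t\gamma,\qquad h\in\ker\overline{\square}_t\cap\Omega^{0,r,s}_t.
$$

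To conclude, I would show $h=0$. Since both $E_tG_t\phi$ and $\partial_t\overline{\partial}_t\gamma$ are orthogonal to $\mathbb{F}^{r,s}_t$, so is $h$, and it suffices to prove $h\in\mathbb{F}^{r,s}_t$, i.e.\ $d_{1,0}h=0$. Applying $d_{1,0}$ to $E_tG_t\phi=h-\partial_t\overline{\partial}_t\gamma$, using $d_{1,0}(E_tG_t\phi)=0$ and the anti-commutation $\{d_{1,0},\partial_t\}=\{d_{1,0},\overline{\partial}_t\}=0$ (both consequences of $d^2=0$ and $d_{2,-1}=0$), one gets $d_{1,0}h=\partial_t\overline{\partial}_t d_{1,0}\gamma\in\mathrm{im}\,\overline{\partial}_t$ as a form on $\Omega^{1,r,s}_t$; combined with $\overline{\partial}_td_{1,0}h=-d_{1,0}\overline{\partial}_th=0$ and the commutation of $\overline{\square}_t$ with $d_{1,0}$ (a reflection of the cosymplectic condition, namely that the Reeb field $\xi$ preserves the leafwise K\"ahler--Riemann structure), this forces $d_{1,0}h\in\ker\overline{\square}_t\cap\mathrm{im}\,\overline{\partial}_t=\{0\}$. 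Hence $h\in\mathbb{F}^{r,s}_t\cap(\mathbb{F}^{r,s}_t)^\perp=\{0\}$, and $\phi=F_t\phi-\partial_t\overline{\partial}_t\gamma$ is the desired decomposition, the condition $d_{1,0}(\partial_t\overline{\partial}_t\gamma)=0$ being automatic from $d_{1,0}\phi=d_{1,0}F_t\phi=0$. The most delicate step is verifying the commutation $[\overline{\square}_t,d_{1,0}]=0$ for the deformed complex structure $J_t$, which is the main obstacle and the place where the cosymplectic geometry enters in an essential way.
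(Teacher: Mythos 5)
Your treatment of the two easy inclusions and of the directness of the sum is correct and is essentially what the paper leaves as ``obvious'' (the orthogonality $\langle\partial_t\overline{\partial}_t\eta,\phi_0\rangle=\langle\eta,\vartheta_t\overline{\vartheta}_t\phi_0\rangle=0$ is the right justification). The problem is the surjectivity half, where you deviate from the paper and the argument breaks. You propose to run a leafwise $\partial\overline{\partial}$-lemma on the remainder $E_tG_t\phi$ using the leafwise Hodge decompositions associated to $\overline{\square}_t$ and $\square_t$. This fails for two independent reasons. First, those decompositions (Theorem \ref{ALK2} and its corollary) only produce the \emph{closures} $\overline{{\rm im}\,\overline{\partial}_t}$, $\overline{{\rm im}\,\vartheta_t}$ in the Fr\'echet topology; images of leafwise operators are not closed in general, so your argument would at best yield $E_tG_t\phi\in\overline{\partial_t\overline{\partial}_t\Omega_t^{0,r-1,s-1}}$ with no actual potential $\gamma$, whereas the statement requires an honest element of $\partial_t\overline{\partial}_t\Omega_{t}^{0,r-1,s-1}$. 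Second, and more seriously, almost every step of your chain (the vanishing of $\overline{\partial}_t$ on $\ker\square_t$, the anticommutation $\{\overline{\partial}_t,\overline{\vartheta}_t\}=0$, and the commutation $[\overline{\square}_t,d_{1,0}]=0$ that you yourself flag as the main obstacle) invokes the K\"ahler identities \eqref{Kidentities} for the \emph{deformed} structure $J_t$. These are not available: for $t\neq 0$ the fundamental form $\widetilde{\omega}_t$ of $(J_t,\widetilde{g}_t)$ is not closed --- producing a closed compatible form is precisely the content of Theorem \ref{main} --- so $(\ker\alpha,J_t,\widetilde g_t)$ is not known to be K\"ahler--Riemann and \eqref{Kidentities} may only be used at $t=0$.

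The paper sidesteps both difficulties by exploiting the genuine (not merely leafwise) ellipticity of $E_t$. Writing $\psi=F_t\psi+E_tG_t\psi$ and grouping the eight summands of $E_t$ in \eqref{Et} according to their leftmost factor gives, with explicit potentials,
$$
E_tG_t\psi=\partial_t\overline{\partial}_t\nu+\vartheta_t\beta+\overline{\vartheta}_t\gamma+\delta_{-1,0}\mu,
\qquad \nu=\vartheta_t\overline{\vartheta}_tG_t\psi,\ \mbox{etc.},
$$
and then $\sigma=\vartheta_t\beta+\overline{\vartheta}_t\gamma+\delta_{-1,0}\mu$ is killed in a single stroke by the self-pairing
$\widetilde{g}_t(\sigma,\sigma)=\widetilde{g}_t(\beta,\overline{\partial}_t\sigma)+\widetilde{g}_t(\gamma,\partial_t\sigma)+\widetilde{g}_t(\mu,d_{1,0}\sigma)=0$, which uses only the closedness of $\sigma$ and no K\"ahler identity for $J_t$. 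If you want to repair your outline, the missing idea is exactly this: extract the $\partial_t\overline{\partial}_t$-exact piece directly from the formula for $E_t$ rather than from leafwise Hodge theory.
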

\begin{proof}
Obviously we have
$$
(\ker d_{1,0}\cap \partial_t\ovp_t \Omega_{t}^{0,r-1,s-1})\oplus
\mathbb{F}^{r,s}_t\subseteq {\rm Z}^{r,s}_t
$$
and
$$
\partial_t\ovp_t \Omega_{t}^{0,r-1,s-1}\cap \mathbb{F}^{r,s}_t= \{ 0 \}\,.
$$
On the other hand, for any $\psi\in {\rm Z}_t^{r,s}$, one has
$$
\psi=E_tG_t\psi+F_t\psi=\partial_t\ovp_t\nu+\vartheta_t\beta+\overline{\vartheta}_t\gamma+\delta_{-1,0}\mu+F_t\psi\,,
$$
where $F_t$ is the projection on the kernel of $E_t$ and $G_t$ is the Green operator associated to $E_t$.

Since $d\psi=0$, we have
$$
d(\vartheta_t\beta+\overline{\vartheta}_t\gamma+\delta_{-1,0}\mu)=0\,.
$$
Furthermore, if
$\sigma=\vartheta_t \, \beta+\overline{\vartheta}_t \, \gamma+\delta_{-1,0} \, \mu$,
then
$$
\begin{array} {lcl}
\tilde{g}_t(\sigma,\sigma)
&=&\tilde{g}_t(\vartheta_t\beta,\sigma)+\tilde{g}_t(\overline{\vartheta}_t\gamma,\sigma)+\tilde{g}_t(\delta_{-1,0}\gamma,\sigma)\\
&=&\tilde{g}_t(\beta,\ovp_t\sigma)+\tilde{g}_t(\gamma,\partial_t\sigma)+\tilde{g}_t(\gamma,d_{1,0}\sigma)=0\,.
\end{array}
$$
Hence $\sigma=0$ and the proposition follows.
\end{proof}

As a consequence we can prove

\begin{prop} \label{mainprop}
\emph{
$ \dim \mathbb{F}_{t}^{1,1}=\dim \mathbb{F}^{1,1}_0$, for $\vert t\vert$ small. Moreover, for $\vert t\vert$ sufficiently small the orthogonal projection $F_t:  \Omega^{0,2}  \to   \ker E_t \cap \Omega^{0,2}$ depends differentiably on $t$ .
}\end{prop}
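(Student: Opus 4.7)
My plan is to follow the classical Kodaira--Morrow pattern: upper semicontinuity from Theorem~\ref{findim}, a topological lower bound from $H^3(M;\C)$, and differentiability from Theorem~4.5 of \cite{KM}. Since Proposition~\ref{deco} shows that $E_t$ is strongly elliptic and self-adjoint, Theorem~\ref{findim} applies and gives
$$
\dim\mathbb{F}_t^{r,s}\le\dim\mathbb{F}_0^{r,s},\qquad |t|<\epsilon,
$$
for each bidegree $(r,s)$ with $r+s=2$. Combined with Theorem~4.5 of \cite{KM}, constancy of the total kernel dimension $\dim(\ker E_t\cap\Omega^{0,2})=\sum_{r+s=2}\dim\mathbb{F}_t^{r,s}$ will automatically produce the differentiability of the projections $F_t$, so the real work is to establish the reverse inequalities.

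For the topological lower bound, I would send a form $\phi\in\mathbb{F}_t^{r,s}$ to its class $[\alpha\wedge\phi]\in H^{r+s+1}(M;\C)$. This is well-defined: Proposition~\ref{deco} gives $\partial_t\phi=\ovp_t\phi=d_{1,0}\phi=0$ and, since the normal bundle of $\fo=\ker\alpha$ is one-dimensional and hence integrable, the component $d_{2,-1}$ of $d$ vanishes on $\Omega^{0,v}$, so $d\phi=0$ and thus $d(\alpha\wedge\phi)=0$. At $t=0$ the K\"ahler identities \eqref{Kidentities} together with the decomposition \eqref{Huv} place $\mathbb{F}_0^{r,s}$ inside $\overline{H}^{r,s}_{\ovp_\fo}\subset\overline{H}^{\,2}_{d_\fo}$, and Proposition~\ref{normal} guarantees that $\alpha\wedge(\cdot)$ is injective on $\overline{H}^{\,2}_{d_\fo}$. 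In particular it is injective on $\mathbb{F}_0^{1,1}$, producing $k:=\dim\mathbb{F}_0^{1,1}$ linearly independent classes $c_1,\ldots,c_k\in H^3(M;\C)$.

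The next step is to lift each $c_j$ to an element of $\mathbb{F}_t^{1,1}$. Fixing a basis $\phi_0^{(1)},\ldots,\phi_0^{(k)}$ of $\mathbb{F}_0^{1,1}$, I would form the $(0,1,1)_t$-projection $\pi_t^{1,1}\phi_0^{(j)}$ with respect to the $J_t$-induced type decomposition (which varies differentiably in $t$), correct it by a Green-operator term built from $G_t$ so that the result is $d$-closed in $\Omega_t^{0,1,1}$, and finally extract its $\mathbb{F}_t^{1,1}$-summand using the splitting $Z_t^{1,1}=(\partial_t\ovp_t\Omega_t^{0,0,0}\cap\ker d_{1,0})\oplus\mathbb{F}_t^{1,1}$ of Proposition~\ref{kernelEt}. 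This yields $\phi_t^{(j)}\in\mathbb{F}_t^{1,1}$ with $[\alpha\wedge\phi_t^{(j)}]=c_j$; since the whole construction is a small perturbation of the identity for $|t|$ small, the $\phi_t^{(j)}$ remain linearly independent, forcing $\dim\mathbb{F}_t^{1,1}\ge k$ and hence equality. The same recipe applied to bases of $\mathbb{F}_0^{2,0}$ and $\mathbb{F}_0^{0,2}$ (using the other components of $\alpha\wedge(\cdot)$) yields constancy of each summand, and therefore of $\dim(\ker E_t\cap\Omega^{0,2})$; the differentiability of $F_t$ then follows from Theorem~4.5 of \cite{KM}.

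The hard part is the lifting step. For $t\ne 0$ the K\"ahler identities are unavailable, so \eqref{Huv} is no longer at our disposal and harmonic forms cannot be separated by $(r,s)$-type on the nose. One must construct the Green-operator correction by hand, relying only on Proposition~\ref{kernelEt} and on a quantitative continuity statement for $G_t$ strong enough to guarantee that $\phi_0^{(j)}\mapsto\phi_t^{(j)}$ is close to the identity for $|t|$ small and hence preserves linear independence. This continuity does not come directly from Theorem~4.5 of \cite{KM} (which needs the very dimension-constancy we are trying to prove), so it has to be extracted from the family-of-elliptic-operators machinery of Kodaira--Morrow together with the differentiable dependence of $J_t$.
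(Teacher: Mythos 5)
Your upper-semicontinuity step and your use of Theorem~\ref{findim} match the paper, but the lower bound is where your proposal has a genuine gap, and you have in fact identified it yourself without closing it: the lifting $\phi_0^{(j)}\mapsto\phi_t^{(j)}$ requires a continuity statement for the Green operator $G_t$ strong enough to make the construction a small perturbation of the identity, yet Theorem~4.5 of \cite{KM} delivers differentiable dependence of $G_t$ only \emph{after} one knows that $\dim\mathbb{F}_t$ is locally constant --- the very fact being proved. Saying that the needed continuity ``has to be extracted from the family-of-elliptic-operators machinery'' is not a proof; without dimension constancy the kernel can jump and $G_t$ can blow up, so no such a priori bound is available. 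As written, the argument is circular.

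The paper avoids lifting altogether and gets the lower bound by pure dimension counting. From Proposition~\ref{kernelEt} one has $\mathbb{F}_t^{1,1}\simeq {\rm Z}_t^{1,1}/(\partial_t\ovp_t\Omega^0\cap\ker d_{1,0})$; enlarging the denominator to $\overline{d_\fo\Omega^{0,1}}\cap\ker d_{1,0}$ and invoking the leafwise Hodge decomposition of Theorem~\ref{ALK2} embeds this quotient into the $t$-\emph{independent} space $\ker\Delta\cap\Omega^{0,2}$, of dimension $b_{0,2}(M)$. The cokernel of that embedding is controlled by the projection $\pi_t$ onto the $(0,2,0)_t$ and $(0,0,2)_t$ components, whose target $\ker(\square_t+d_{1,0})\cap\Omega_t^{0,2,0}\oplus\ker(\overline{\square}_t+d_{1,0})\cap\Omega_t^{0,0,2}$ has dimension $2h_{0,2}^t$. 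Exactness of \eqref{esatta} then gives $\dim\mathbb{F}_t^{1,1}\geq b_{0,2}(M)-2h_{0,2}^{t}$; a \emph{second} application of upper semicontinuity, this time to the strongly elliptic operator $\delta_{-1,0}d_{1,0}+\overline{\square}_t$ of Proposition~\ref{elliptic}, yields $h_{0,2}^t\leq h_{0,2}^0$, while the K\"ahler identities at $t=0$ give $b_{0,2}(M)=h_{1,1}^0+2h_{0,2}^0$ and $\dim\mathbb{F}_0^{1,1}=h_{1,1}^0$. The sandwich $\dim\mathbb{F}_0^{1,1}\geq\dim\mathbb{F}_t^{1,1}\geq b_{0,2}(M)-2h_{0,2}^t\geq\dim\mathbb{F}_0^{1,1}$ closes the argument with no Green-operator estimates at all. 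The missing idea in your proposal is precisely this replacement of an explicit lift by the topological invariance of $b_{0,2}(M)$ combined with upper semicontinuity of the ``defect'' $h_{0,2}^t$.
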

\begin{proof}
By Proposition \ref{kernelEt}, we have
$$
{\rm Z}^{1,1}_t=(\partial_t\ovp_t\Omega^0\cap \ker d_{1,0})\oplus \mathbb{F}_t^{1,1}\,.
$$
Therefore
we get
$$
\mathbb{F}_t^{1,1}={\rm Z}^{1,1}_t/(\partial_t\ovp_t\Omega^0\cap\ker d_{1,0})\,.
$$
Moreover
$$
\partial_t\ovp_t\Omega^0\cap\ker d_{1,0}\subseteq\overline{d_\fo\Omega^{0,1}}\cap\ker d_{1,0}
$$
and
$$
\mathbb{F}^{1,1}_t\subseteq\frac{{\rm Z}_t^{1,1}+(\overline{d_\fo\Omega^{0,1}}\cap\ker d_{1,0})}{\overline{d_\fo\Omega^{0,1}}\cap\ker d_{1,0}}\,,
$$
where $\Omega^{0,1}  = \Omega_t^{0,1,0}  \oplus \Omega_t^{0,0,1}$ and the bar denotes the Fr\'echet closure in the space
$$
\Omega^{0,2}  = \Omega_t^{0,2,0} \oplus \Omega_t^{0,1,1} \oplus \Omega_t^{0,0,2}.
$$
Since $\fo$ is a Riemannian foliation, by Theorem \ref{ALK2}
we have the isomorphism
\begin{equation}\label{frac}
\frac{\ker d_\fo\cap\O^{0,2}\cap\ker d_{1,0}
}{\overline{d_\fo\Omega_t^{0,1}}\cap\ker d_{1,0}}\simeq\ker\Delta_\fo\cap\Omega^{0,2}\cap\ker d_{1,0}\,.
\end{equation}
Now we observe that since $d\colon\O^{0,v}\to\Omega^{1,v}\oplus\O^{0,v+1}$ splits as $d=d_{1,0}+d_{\fo}$ and
the operator $\delta_{-1,0}$ does not act on $\O^{0,u}$, then the Laplace operator
$\Delta=d\delta+\delta d$ reduces on $\O^{0,v}$ to
$$
\Delta=
(\delta_{-1,0}+\delta_{0,-1})(d_{1,0}+d_{\fo})+(d_{1,0}+d_{\fo})(\delta_{0,-1}).
$$
Moreover, a form $\phi\in\O^{0,v}$ belongs to $\ker\Delta$ if and only if it  belongs to the space $\ker d_{1,0}\cap\ker\Delta_{\fo}$. Hence the
cohomology groups
$$
H^{0,v}(M):=\ker\Delta\cap \O^{0,v}
$$
can be identified with the spaces
$$
H^{0,v}(M)=\ker(\Delta_\fo+d_{1,0})\cap\O^{0,v}=\overline{H}^{v}_{d_\fo}\cap \ker d_{1,0}\,.
$$
Now we observe that the sequence
\begin{equation} \label{esatta}
\begin{diagram}
\node{}\quad \quad \node{}\node{ \frac{{\rm Z}_t^{1,1}+ \left
(\overline{d_\fo\Omega^{0,1}}\cap \ker d_{1,0} \right)}
{\overline{d_\fo\Omega^{0,1}}\cap\ker d_{1,0}}
\hookrightarrow  \frac{\ker d \cap \Omega^{0,2}}{\overline{d_\fo\Omega^{0,1}}\cap\ker d_{1,0}}\simeq\ker \Delta\cap \O^{0,2}}\\
\node{} \node{} \node{}\arrow{s,l}{\pi_t}\\
\node{}\\
\node{} \node{} \node{\ker \square_t\cap\ker d_{1,0}\cap\O^{0,2,0}_t
\oplus \ker \overline{\square}_t \cap \ker d_{1,0}\cap\O^{0,0,2}_t}
\end{diagram}
\end{equation}
is exact, where the map $\pi_t$ is defined by
$$
\pi_t(\psi)=\pi_t(\psi_t^{0,2,0}+\psi_t^{0,1,1}+\psi_t^{0,0,2})=\psi_t^{0,2,0}+\psi_t^{0,0,2}\,,
$$
and  $\psi_t^{u,r,s}$ denotes the component of $\psi$ in $\Omega^{u,r,s}_t$.

Since $(\fo,J_0)$ is a K\"ahler-Riemann foliation, one has
the K\"ahler identities \eqref{Kidentities} and in particular
$$
\overline{H}^{2}_{\fo}=\overline{H}^{2,0}_{\ovp_0}\oplus\overline{H}^{1,1}_{\ovp_0}\oplus\overline{H}^{0,2}_{\ovp_0}\,.
$$
Moreover
$$
H^{0,2}(M)=\overline{H}^{2}_{\fo}\cap \ker d_{1,0}=\overline{H}^{2,0}_{\ovp_0}\cap \ker d_{1,0}\oplus\overline{H}^{1,1}_{\ovp_0}\cap \ker d_{1,0}\oplus
\overline{H}^{0,2}_{\ovp_0}\cap \ker d_{1,0}\,.
$$
By Proposition \ref{elliptic}, the operator $\delta_{-1,0}d_{1,0} +\overline{\square}_t$ is strongly elliptic for every $t$; hence we can apply
Theorem
\ref{findim} obtaining
$$
\dim\Big(\ker(\overline{\square}_t+d_{1,0})\cap\Omega^{0,0,2}_{t}\Big) <  \infty\,,\mbox{ for }\vert t\vert\mbox{ small}.
$$
Let
$$
\begin{array}{l}
 b_{0,v} (M) := \dim H^{0,v}(M), \\[4pt]
  h_{r,s}^{t}:=\dim\Big(\ker(\overline{\square}_t+d_{1,0})\cap\Omega^{0,r,s}_t\Big)=\dim\Big(\ker({\square}_t+d_{1,0})\cap\Omega^{0,s,r}_t\Big)\,.
  \end{array}
$$
Then, by the exactness of \eqref{esatta}, we have
\begin{equation}
\label{Ftcon}
\dim\mathbb{F}^{1,1}_t\geq b_{0,2} (M) -2h_{0,2}^{t}
\end{equation}
and furthermore, since $(J, \xi, \alpha, g)$ is cosymplectic,
\begin{equation} \label{b02}
b_{0,2} (M)=h_{2,0}^{0}+h_{1,1}^{0}+h_{0,2}^{0}=h_{1,1}^{0}+2h_{0,2}^{0}\,.
\end{equation}
By equations \eqref{Et0}  and  \eqref{b02} we obtain
$$
\dim\mathbb{F}^{1,1}_0=h_{1,1}^{0} =b_{0,2} (M)-2h_{0,2}^{0}\,.
$$
As a consequence of  Theorem \ref{findim} we have
$$
\dim\mathbb{F}_t^{1,1}\leq \dim\mathbb{F}_0^{1,1}\,,\mbox{ for $\vert t\vert$ small}\,.
$$
Since
$$
  \delta_{-1,0}  \, d_{1,0} +\overline{\square}_t\colon\Omega^{0,r,s}_t\to\Omega^{0,r,s}_t
$$
is strongly elliptic, then the map $t\mapsto h_{0,2}^{t}$ is upper semicontinuos and
$$
h_{0,2}^{t}\leq h_{0,2}^{0}\,,\mbox{ for $\vert t\vert$ small}\,.
$$
Therefore
$$
\dim\,\mathbb{F}^{1,1}_0\geq\dim\,\mathbb{F}^{1,1}_t\geq
b_{0,2} (M)-2h_{0,2}^{t}\geq b_{0,2} (M) -2h_{0,2}^{0}
=\dim\mathbb{F}^{1,1}_0\,.
$$
which implies the proposition.
\end{proof}

By using Proposition \ref{mainprop}   we are now able to  prove Theorem $\ref{main}$.

\begin{proof}[Proof of Theorem $\ref{main}$]
Let
$$
\widetilde{\omega}_{t}(\cdot,\cdot):=\widetilde{g}_t(J_t\cdot,\cdot)
$$
be the fundamental $2$-form associated to $(J_t, \tilde g_t)$. This form cannot  be  closed, but
  if we consider the new  real  $2$-form
$$
\omega_{t}:=\frac12\Big(F_t(\widetilde{\omega}_{t})+\overline{F_t(\widetilde{\omega}_{t})}\,\Big)\,,
$$
on $M$, we have that     $\omega_t$  is closed and satisfies  $\omega_0=\omega$. Since  the orthogonal projection $F_t$ depends differentiably on $t$, then
  $\alpha\wedge\omega_t^n\neq 0$ for $\vert t\vert$ sufficiently small.

Moreover,  if we introduce the  new Riemannian metric
$$
g_t(X,Y):=\omega_t(X,J_tY)+\alpha(X)\,\alpha(Y)\,, \quad X,Y\in\Gamma(TM),
$$
then  for $\vert t\vert$ small $(J_t,\xi,\alpha,g_t)$ defines a cosymplectic structure on $M$ such that $g_0 =g$ and the theorem follows.
\end{proof}
\section{Cosymplectic structures on Lie groups and new examples}

In this section we  set a one-to-one correspondence between cosymplectic and K\"ahler Lie algebras.  More precisely, we show that  if  a  K\"ahler Lie algebra is  endowed with a particular type of derivation,  then one can construct   a cosymplectic  Lie algebra.

We recall that a  \emph{cosymplectic structure} on  a real Lie algebra $\mathfrak{g}$ of dimension $2n + 1$ is a quadruple $(J,\xi,\alpha,g)$,
where $\xi$ is a non-zero vector of $\mathfrak{g}$,
$\alpha$
is  the   dual  $1$-form of $\xi$, $J$  is an endomorphism of $\mathfrak{g}$, $g$ is a inner product  on $\mathfrak g$
satisfying  the conditions $$
\begin{array} {l}
 d \alpha = 0,  \quad  J^2=-{\rm Id}+\alpha\otimes \xi,  \quad N_J =0, \\
g(J\cdot,J\cdot)=g(\cdot,\cdot)-\alpha\otimes\alpha(\cdot,\cdot), \quad
d \omega =0,
\end{array}
$$ being $\omega$  the  $2$-form defined by
$\omega(\cdot,\cdot):=g(J\cdot,\cdot)$.

If $G$ is the simply connected  Lie group with Lie algebra $\mathfrak g$,   then a  left-invariant  cosymplectic structure on  $G$  is equivalent   to  a  cosymplectic
structures on  its Lie algebra $\mathfrak g$.

For cosymplectic Lie algebras we can prove the following

\begin{theorem}  \label{constructionderivation} Cosymplectic Lie algebras   of dimension  $2n+1$ are in one-to-one correspondence with  $2n$-dimensional
K\"ahler Lie algebras   equipped with a skew-adjoint derivation $D$ commuting with its complex structure.
\end{theorem}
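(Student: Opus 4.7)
The plan is to exhibit the correspondence explicitly as two mutually inverse constructions. In one direction, given a cosymplectic Lie algebra $(\mathfrak{g}, J, \xi, \alpha, g)$, I set $\mathfrak{h} := \ker \alpha$; the identity $d\alpha(X,Y) = -\alpha([X,Y])$ together with $d\alpha = 0$ shows that $\mathfrak{h}$ is an ideal of codimension one, so that $\mathfrak{g} = \mathfrak{h} \oplus \mathbb{R}\xi$ as a vector space. A quick check using $J^{2} = -{\rm Id} + \alpha \otimes \xi$ forces $J\xi = 0$ and $\alpha \circ J = 0$, so $J$ restricts to an endomorphism $J_{0}$ of $\mathfrak{h}$ with $J_{0}^{2} = -{\rm Id}$. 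The restrictions $g_{0} := g|_{\mathfrak{h}}$ and $\omega_{0} := \omega|_{\mathfrak{h}}$ inherit the K\"ahler axioms from the cosymplectic ones (compatibility from $g(J\cdot, J\cdot) = g - \alpha \otimes \alpha$; closedness of $\omega_{0}$ from $d\omega = 0$; integrability of $J_{0}$ from $N_{J} = 0$ together with $d\alpha = 0$). The derivation I associate to these data is $D := {\rm ad}_{\xi}|_{\mathfrak{h}}$, which is a derivation of $\mathfrak{h}$ by Jacobi. Its remaining two properties are extracted from the other cosymplectic identities: expanding $N_{J}(\xi, X) = 0$ using $J\xi = 0$ and $d\alpha = 0$ gives $D J_{0} = J_{0} D$, while expanding $d\omega(\xi, X, Y) = 0$ for $X, Y \in \mathfrak{h}$ reduces to $\omega_{0}(DX, Y) + \omega_{0}(X, DY) = 0$; combining the two shows that $D$ is $g_{0}$-skew-adjoint.

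In the reverse direction, starting from K\"ahler data $(\mathfrak{h}, J_{0}, g_{0}, \omega_{0})$ equipped with a skew-adjoint derivation $D$ commuting with $J_{0}$, I form the semidirect sum $\mathfrak{g} := \mathfrak{h} \rtimes_{D} \mathbb{R}\xi$ with bracket $[\xi, X] := DX$ (the derivation property is precisely what Jacobi requires), extend $g_{0}$ orthogonally by $g(\xi, \xi) := 1$, extend $J_{0}$ by $J\xi := 0$, and let $\alpha$ be the metric dual of $\xi$. The cosymplectic axioms are then verified case by case on generators: $d\alpha = 0$ because all brackets land in $\mathfrak{h} = \ker \alpha$; metric compatibility follows from orthogonality and $J\xi = 0$; $N_{J} = 0$ reduces on pairs in $\mathfrak{h}$ to integrability of $J_{0}$ and on $(\xi, X)$-pairs to $[D, J_{0}] = 0$; and $d\omega = 0$ is automatic on pure-$\mathfrak{h}$ triples by $d\omega_{0} = 0$, while on mixed triples $(\xi, X, Y)$ it reduces to $\omega_{0}(DX, Y) + \omega_{0}(X, DY) = 0$, which is exactly the combination of $D$-skew-adjointness and $[D, J_{0}] = 0$.

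Finally, the two constructions are inverse to each other: a cosymplectic Lie algebra is reconstructed from its K\"ahler ideal together with ${\rm ad}_{\xi}$ via the very semidirect decomposition $\mathfrak{g} = \ker \alpha \oplus \mathbb{R}\xi$, and conversely the $\mathfrak{g}$ built from $(\mathfrak{h}, D)$ has $\ker \alpha = \mathfrak{h}$ and ${\rm ad}_{\xi}|_{\mathfrak{h}} = D$ by construction. I expect the main delicacy to lie not in any individual verification, each of which is a short Jacobi- or Cartan-formula computation, but in the bookkeeping of which cosymplectic identity translates into which algebraic condition on $D$ — in particular, disentangling $[D, J_{0}] = 0$ (coming from $N_{J} = 0$) from $g_{0}$-skew-adjointness (which needs $d\omega = 0$ but only after $[D, J_{0}] = 0$ has been established).
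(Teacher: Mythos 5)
Your proposal is correct and follows essentially the same route as the paper: restrict the structure to the codimension-one ideal $\ker\alpha$ to obtain the K\"ahler Lie algebra with $D=\mathrm{ad}_\xi$, and conversely rebuild the cosymplectic algebra as the semidirect sum $\R\xi\oplus_D\mathfrak{h}$. If anything, you are slightly more explicit than the paper in tracing $[D,J_0]=0$ to the normality condition $N_J(\xi,\cdot)=0$ and only then extracting skew-adjointness from $d\omega(\xi,\cdot,\cdot)=0$.
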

\begin{proof} Let $(J,\xi,\alpha,g)$ be a cosymplectic structure on a $(2n+1)$-dimensional Lie algebra $\mathfrak{g}$
and let  $ {\mathfrak h} = \ker \alpha$. Since $\alpha$ is closed, $\mathfrak{h}$
is a Lie subalgebra of $\mathfrak g$ carrying a K\"ahler structure induced by the pair $(g,J)$.
Furthermore, since $\alpha(\xi)=1$,  the vector $\xi$
does not belong to the commutator ${\mathfrak g}^1 = [ {\mathfrak g}, {\mathfrak g}]$. Therefore we have
$$
[\xi, {\mathfrak h}] \subseteq {\mathfrak h}, \quad
[{\mathfrak h}, {\mathfrak h}] \subseteq     {\mathfrak h}
$$
and, consequently,
${\mathfrak g}$ is the semidirect sum
$$
\mathfrak{g}=  \R \xi \oplus_{{\rm ad}_{\xi}} \mathfrak{h}\,.
$$
Since the fundamental $2$-form $\omega$ associated to the cosymplectic structure is closed and $\mathfrak h$ is a Lie subalgebra of $\mathfrak g$, then one has
$$
d \omega (\xi , X_1, X_2) = - \omega ([\xi , X_1], X_2) + \omega([ \xi , X_2], X_1) =0,
$$
for any $X_1, X_2 \in {\mathfrak h}$. Moreover,  by  definition of $\omega$ we have
$$
g (J {\rm ad}_{\xi} (X_1), X_2) = g (J {\rm ad}_{\xi} (X_2), X_1),
$$
for any $X_1, X_2 \in \mathfrak{h}$ and thus $J {\rm ad}_{\xi} = {\rm ad}_{\xi} J$ is a self-adjoint  endomorphism of $({\mathfrak h}, g )$.
Therefore,  ${\rm ad}_{\xi}$ is  skew-adjoint and commutes with $J$ on $\mathfrak h$..

On the other hand, if $\mathfrak{h}$ be a $2n$-dimensional Lie algebra endowed with a K\"ahler structure $(g,J)$ and a
skew-adjoint derivation $D$ commuting with $J$, then we can construct a cosymplectic Lie algebra as follows.\\
We consider the vector space $\mathfrak{g}=\R \xi \oplus \mathfrak{h}$ and we define a Lie bracket $[\,\,,\,]$
on $\mathfrak{g}$ by the relations
$$
[X,Y] =[X,Y]_{\mathfrak{h}}\,,\quad [\xi,X]=D(X)\,,
$$
for any $X,  Y \in\mathfrak{h}$, being $[\,\,,\,]_{\mathfrak{h}}$ the Lie  bracket on $\mathfrak{h}$. We extend the endomorphism $J$ on
$\mathfrak{g}$ by putting  $J \xi =0$ and we consider the
 inner product $g$ on the Lie algebra  $\mathfrak{g}$ defined as   the unique  metric extension
of the  inner product   on  $\mathfrak{h}$ for which $ \xi$ is unitary and orthogonal to $\mathfrak{h}$. Finally, as $1$-form $\alpha$   on $\mathfrak{g}$ we take the
dual to $\xi$. Then  $(\mathfrak{g},J,\xi,\alpha,g)$ is a cosymplectic Lie algebra.
\end{proof}

\begin{rem}{\em
We remark that this last Theorem is proved in a more general contest in \cite{D1}.}
\end{rem}

\begin{ex}\emph{ An example of  cosymplectic Lie algebra was given in \cite{marrero}. In this case the Lie algebra ${\mathfrak g}$ is solvable, with structure equations:
$$
[X_i, Z] = \frac{3}{2} \pi Y_i, \quad [Y_i, Z] = - \frac{3}{2} \pi X_i, \quad  i = 1, \ldots, n
$$
and the other Lie brackets are zero.  With respect
to  the  previous theorem we have  that the  K\"ahler Lie algebra ${\mathfrak h}$ is the
abelian Lie algebra ${\mbox {span}}\{X_1, Y_1, \ldots, X_n, Y_n\}$ and that $\xi=Z$.  The corresponding Lie group is unimodular and admits a compact quotient by a uniform discrete subgroup. Moreover, the  induced left-invariant Riemannian metric  on the compact quotient   is flat.
}
\end{ex}

We recall  by \cite{Milnor} that  if a  Lie group admits a compact quotient by a  uniform discrete subgroup, then the Lie group is unimodular, i.e. ${\mbox {tr}} \, ad_X =0$ for any $X$ in the Lie algebra of $G$.

By \cite{LM}, if  a unimodular Lie group $G$ admits a left-invariant symplectic structure, then it has to be solvable. Moreover,
by \cite{Hano} a K\" ahler unimodular Lie group is flat. Hence, we have the following
\begin{prop}\emph{
A cosymplectic unimodular
Lie group is  necessarily flat and solvable.}
\end{prop}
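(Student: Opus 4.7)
The plan is to apply Theorem~\ref{constructionderivation} to reduce the statement to the results of Hano and Lichnerowicz--Medina on K\"ahler and symplectic Lie groups cited in the introduction. Let $(\g,J,\xi,\alpha,g)$ denote the cosymplectic Lie algebra of $G$ and write $\g=\R\xi\oplus_D\mathfrak{h}$ as in Theorem~\ref{constructionderivation}, where $\mathfrak{h}=\ker\alpha$ is a codimension-one K\"ahler Lie subalgebra and $D=\mathrm{ad}_\xi|_\mathfrak{h}$ is a skew-adjoint derivation commuting with $J|_\mathfrak{h}$.

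The first step is to transfer unimodularity from $\g$ to $\mathfrak{h}$. Since $\mathfrak{h}$ is an ideal of $\g$ and $\mathrm{ad}_X(\xi)=-D(X)\in\mathfrak{h}$ for every $X\in\mathfrak{h}$, in any basis adapted to the splitting $\g=\R\xi\oplus\mathfrak{h}$ the matrix of $\mathrm{ad}_X$ has a vanishing $\xi$-row, and hence $\mathrm{tr}_{\g}\,\mathrm{ad}_X=\mathrm{tr}_\mathfrak{h}(\mathrm{ad}_X|_\mathfrak{h})$. Thus the unimodularity of $\g$ passes to $\mathfrak{h}$; the remaining identity $\mathrm{tr}\,\mathrm{ad}_\xi=\mathrm{tr}\,D=0$ is automatic because $D$ is skew-adjoint. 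By \cite{LM}, the unimodular symplectic Lie algebra $\mathfrak{h}$ is solvable, so the one-dimensional extension $\g$ is solvable as well. By \cite{Hano}, the unimodular K\"ahler Lie algebra $\mathfrak{h}$ carries a flat metric.

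To upgrade flatness of $\mathfrak{h}$ to flatness of $\g$ the plan is to show that the Reeb field $\xi$ is parallel and then invoke de Rham. A short Koszul computation should give $\nabla_Y\xi=0$ for $Y\in\mathfrak{h}$ (using skew-adjointness of $D$) and $\nabla_\xi\xi=0$ (using $D(\mathfrak{h})\subseteq\mathfrak{h}$), so that $\nabla\xi=0$ and the distribution $\mathfrak{h}=\xi^{\perp}$ is parallel as well. Passing to the simply connected cover, the de Rham decomposition theorem exhibits $\widetilde G$ as a Riemannian product $\R\times\widetilde H$, where $\widetilde H$ is the simply connected Lie group of $\mathfrak{h}$ with its flat K\"ahler metric. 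Flatness of a Riemannian product being equivalent to flatness of each factor, Hano's flatness of $\widetilde H$ yields flatness of $\widetilde G$, and hence of $G$.

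The main obstacle is the Koszul verification that $\nabla\xi=0$ on $\g$: this is the step where the full force of Theorem~\ref{constructionderivation} -- both the skew-adjointness of $D$ and its commutation with $J$ -- actually enters. Once this identity is in hand the rest of the argument is essentially a concatenation of results already in the literature, so the conceptual work is concentrated in checking that the extension data prescribed by Theorem~\ref{constructionderivation} are rigid enough to make $\xi$ parallel and the splitting $\g=\R\xi\oplus\mathfrak{h}$ metrically reducible.
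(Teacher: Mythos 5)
Your argument is correct and follows the same route the paper takes: reduce via Theorem~\ref{constructionderivation} to the unimodular K\"ahler subalgebra $\mathfrak{h}=\ker\alpha$, then invoke Lichnerowicz--Medina \cite{LM} for solvability and Hano \cite{Hano} for flatness. The paper justifies the proposition with only these two citations, so your verification that unimodularity of $\g$ descends to $\mathfrak{h}$ (via the vanishing $\xi$-row of $\mathrm{ad}_X$ and $\mathrm{tr}\,D=0$) and that the Koszul computation gives $\nabla\xi=0$, making $\g$ flat exactly when $g|_{\mathfrak{h}}$ is, supplies precisely the details the paper leaves implicit; both checks are correct.
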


Therefore, it is interesting to see if there exist  non-flat  cosymplectic Lie algebras. By  the previous Proposition the corresponding solvable Lie group will be  not unimodular and therefore will do not admit any compact quotient. We will  construct new cosymplectic solvable Lie algebras by applying   Theorem \ref{constructionderivation} to solvable K\"ahler Lie algebras  which admit a   derivation with the previous property.

Many results are known about K\"ahler  Lie groups (see for instance \cite{cortes}).  Basic examples are flat K\" ahler Lie groups and solvable K\" ahler Lie groups
acting simply transitively on a bounded domain in $\C^n$ by biholomorphisms  (see \cite{Gindikin}). A classification of solvable K\"ahler Lie groups, up
to (a holomorphic isometry) is obtained in \cite{Pyateeskii-Shapiro}.

We recall that   by \cite{Datri,Pyateeskii-Shapiro} a normal $J$-algebra $\mathfrak a$ is in general a real  solvable Lie algebra endowed with an integrable almost complex structure $J$  and a linear form $\mu$ on $\mathfrak a$ (called admissible) such that
$$
\mu ([JX, J Y]) = \mu ([X, Y]), \quad \mu ([JZ, Z]) > 0,
$$
for all $X, Y \in {\mathfrak a}$ and $Z \neq 0 \in {\mathfrak a}$.   By the previous conditions the eigenvalues of the adjoint representation of $\mathfrak a$ are real, i.e. ${\mathfrak a}$ is completely solvable.  Moreover, the bilinear form $< X, Y> = \mu ([J X, Y])$ determines a K\"ahler  metric on ${\mathfrak a}$.

Given a solvable  K\"ahler Lie algebra $({\mathfrak h}, J, g)$, a   \lq \lq weak modification map\rq \rq \, is a linear map
${\mathcal D}: {\mathfrak h} \to {\mbox {Der}} \, {\mathfrak h}$ such that
\begin{enumerate}
\item the derivation ${\mathcal D} (X)$ is skew-adjoint with respect to $g$;
\item $[{\mathcal D} (X), J] =0$;
\item $[{\mathcal D} (X), {\mathcal D} (Y)] = {\mathcal D}  ([X, Y]) = 0$;
\item ${\mathcal D} ({\mathcal D} (X) Y - {\mathcal D} (Y) X ) =0$,
\end{enumerate}
for any $X, Y \in {\mathfrak h}$. One may define a new Lie bracket on $({\mathfrak h}, J, g)$ by
$$
(X, Y) = [X, Y] + D(X) Y - D(Y) X,
$$
which gives to $\mathfrak h$ a structure of Lie algebra and $({\mathfrak h}_D, ( \, , \, ), J, g)$ is called a
{\emph {modification}}  of $\mathfrak h$ (see \cite{Dorfmeister}).

In  \cite{Dorfmeister}  Dorfmeister proved  that any solvable   K\" ahler Lie algebra  $({\mathfrak g}, J, g)$ is a  modification of the semidirect  products ${\mathfrak a} \oplus {\mathfrak b}$, where $\mathfrak a$ is  an abelian ideal and  $\mathfrak b$ is a normal $J$-algebra.  Therefore, by using the previous result we can construct new  cosymplectic Lie algebras.

\begin{ex}  \emph{
\noindent  By \cite{Dorfmeister} there exist solvable K\"ahler Lie algebras $({\mathfrak s}, J, g)$ admitting a  skew-adjoint
derivation commuting with their complex structure $J$.
Examples of such Lie algebras
can be written in the form:
$$
{\mathfrak s} =  {\mathfrak v} + {\mathfrak s}',
$$
where  ${\mathfrak v} = \R X_0 + \R J X_0 + {\mathfrak u}$ and ${\mathfrak s}'$ are both  $J$-invariant subalgebras of ${\mathfrak s}$, such that $[J X_0, X_0] = X_0$, $[X_0, {\mathfrak s}'] = 0$ and the eingenvalues of $ad_{JX_0}$ restricted to ${\mathfrak u}$ have real part  equal to $\frac 12$. Moreover $X_0, J X_0$, ${\mathfrak u}$ and ${\mathfrak s}'$ are pairwise orthogonal.
Therefore, by \cite[Sections 5.1 and  5.2]{Dorfmeister}
${\mathfrak s}$ admits a self-adjoint derivation  $D$ commuting with $J$, defined by
$$
D( a X_0 + b JX_0 + U + X') = [JX_0, U] - \frac {1}{2} U + [JX_0, X']\,,
$$
for any $a, b \in \R$ and $U \in {\mathfrak u}, X' \in {\mathfrak s}'$. \newline
Therefore by Theorem \ref{constructionderivation} the Lie algebra $\R \xi \oplus_D {\mathfrak s}$ has a cosymplectic structure.
}
\end{ex}

\section{Cosymplectic structures on solvmanifolds}

In \cite{marrero} some examples of cosymplectic solvmanifolds are constructed as suspensions
with fibre a $2n$-dimensional compact K\"ahler manifold $N$
(a torus) of representations defined by Hermitian isometries.  Indeed, they consider
a Hermitian isometry $f$ of the torus   $N$ and define on the product $N \times \R$ the free and properly
discontinuous action of $\Z$ given by
$$
(n,(x, t))\mapsto (f^n (x),t-n)\,,
$$
for any $n \in \Z$ and $(x, t) \in N \times \R$. The  orbit space $N \times \R / \Z$ is
a compact $(2n +1)$-dimensional manifold  endowed with the cosymplectic structure induced by the natural one on  $N \times \R$.  We will show  that the solvmanifolds constructed in this way  are finite quotient of a torus and the metrics associated to the
cosymplectic structures are flat.

In this section we  will prove that any
solvmanifold admitting a cosymplectic structure, even not  left-invariant,  is a finite quotient of a torus.
By considering the universal covering $\tilde G$, it is known (see \cite{Au})
that a solvmanifold has as finite (normal) covering a solvmanifold
of the form $\tilde G / \Gamma$, with discrete isotropy subgroup $\Gamma$.
Therefore, we will consider as solvmanifold  a compact quotient of a simply-connected solvable Lie group $G$ by a uniform discrete subgroup
$\Gamma$.

In  \cite{Ha}  Hasegawa proved that a solvmanifold admits a K\"ahler structure if and only if it is a finite quotient of complex
torus which has a structure of a complex torus bundle over a complex torus. By using this  result we are able to prove  the following

\begin{theorem}\label{Giapponese}
A solvmanifold has a cosymplectic  structure if and only if it is a finite quotient of  torus which has a structure of a
torus bundle over a complex  torus.
\end{theorem}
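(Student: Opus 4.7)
The plan is to reduce the statement to Hasegawa's classification of K\"ahler solvmanifolds by passing to the product $M\times S^1$. Given a cosymplectic structure $(J,\xi,\alpha,g)$ on $M$, I would extend $J$ to an almost complex structure $\widehat J$ on $T(M\times S^1)$ by declaring $\widehat J\xi=\partial_t$, $\widehat J\partial_t=-\xi$ and leaving $J$ unchanged on $\ker\alpha$, and take the product metric $\widehat g=g\oplus dt^2$. A standard computation, using $d\omega=d\alpha=0$ together with the normality identity $N_J=2\,d\alpha\otimes\xi=0$, shows that $(M\times S^1,\widehat J,\widehat g)$ is a K\"ahler manifold whose fundamental $2$-form is $\widehat\omega=\omega+\alpha\wedge dt$.

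Since $M=\Gamma\backslash G$, the product $M\times S^1=(\Gamma\times\Z)\backslash(G\times\R)$ is a solvmanifold of real dimension $2n+2$. Hasegawa's theorem then produces a finite covering $\widetilde T\to M\times S^1$ in which $\widetilde T$ is a complex torus carrying the structure of a complex torus bundle over a complex torus. Consequently $\pi_1(M\times S^1)=\Gamma\times\Z$ contains a finite-index subgroup isomorphic to $\Z^{2n+2}$, and projecting to the first factor yields a finite-index abelian subgroup $\Gamma_0\subset\Gamma$ of rank $2n+1$. The lattice $\Gamma_0\cong\Z^{2n+1}$ is cocompact both in $G$ and in $\R^{2n+1}$, so by Mostow's rigidity for lattices in simply-connected solvable Lie groups we conclude $G\cong\R^{2n+1}$. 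Hence $M$ is a finite quotient of the torus $\T^{2n+1}$ by the residual finite group $\Gamma/\Gamma_0$.

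To exhibit the torus bundle structure, I would pass to a common finite cover $\T^{2n+1}\times S^1$ of both $M\times S^1$ and the Hasegawa complex torus $\widetilde T$, and transport the holomorphic projection $\widetilde T\to T'_{\C}$ to this cover. The distinguished $\widehat J$-holomorphic direction spanned by $\partial_t+i\xi$ has the property that the quotient by it recovers the $\T^{2n+1}$-factor; after arranging the Hasegawa fibration so that this complex one-dimensional direction is vertical, the induced projection realises $\T^{2n+1}$ as a real torus bundle over a complex torus $T''_{\C}$. This bundle structure is invariant under the finite action of $\Gamma/\Gamma_0$ and so descends to $M$. The main technical obstacle I expect lies exactly in this adjustment step: one must show that the Hasegawa fibration can be chosen, up to isogeny of total space and base, with the $(\partial_t+i\xi)$-direction vertical. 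For the converse, a torus bundle $\T^{2n+1}\to T^{2m}_{\C}$ carries a manifest cosymplectic structure obtained by pulling back the K\"ahler form of the base and combining it with a closed connection $1$-form $\alpha$ dual to a vertical unit Reeb field $\xi$; invariance of these data under the finite covering group then produces the required cosymplectic structure on $M$.
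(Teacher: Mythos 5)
Your reduction to Hasegawa's theorem via the K\"ahler product $M\times S^1$ is exactly the strategy of the paper, and the construction of $\widehat{J}$ and $\widehat{\omega}=\omega+\alpha\wedge dt$ is the standard, correct way to make that reduction precise. Two of your intermediate steps, however, do not work as written. First, Mostow rigidity does not give $G\cong\R^{2n+1}$ from the existence of a cocompact lattice $\Gamma_0\cong\Z^{2n+1}$: the group $\Z^3$ is a cocompact lattice both in $\R^3$ and in the universal cover of $E(2)$, and indeed the Marrero--Padron examples quoted in the paper live on non-abelian solvable groups whose quotients are nonetheless finite quotients of tori. What you actually need (and what suffices) is Mostow's theorem that compact solvmanifolds with isomorphic fundamental groups are diffeomorphic, applied to $G/\Gamma_0$ and $\T^{2n+1}$; the conclusion that $M$ is a finite quotient of a torus survives, but the claim about $G$ itself should be dropped.

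Second, the step you flag as the ``main technical obstacle'' --- arranging the Hasegawa fibration of $M\times S^1$ so that the $(\partial_t+i\xi)$-direction is vertical --- is a genuine gap, and the paper avoids it entirely by arguing on the level of fundamental groups: $\Gamma\times\Z$ is a Bieberbach group, its translation lattice $\Z^{2n+2}$ meets $\Gamma$ in a finite-index subgroup of the form $\Z^{2l-1}\times s_1\Z\times\cdots\times s_{2k}\Z$, and the resulting extension $0\to\Z^{2l-1}\to\Gamma\to\Z^{2k}\to 0$, together with the holomorphy of the holonomy action on $\C^{n+1}/(\Gamma\times\Z)$, exhibits $M$ directly as a $\T^{2l-1}$-bundle over the complex torus $\C^{k}/\Z^{2k}$; no geometric realignment of the fibration is required. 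Finally, your converse is not correct as sketched: pulling back the K\"ahler form of the base produces a $2$-form of rank only $2k$, so $\alpha\wedge\omega^{n}=0$ whenever the fibre has dimension $2l-1>1$. To obtain a cosymplectic structure one must use the flat K\"ahler structure transverse to the Reeb direction on the covering torus (equivalently, include a fibrewise contribution to $\omega$), not just the base form. (The paper, for what it is worth, only writes out the forward implication.)
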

\begin{proof} Let $M = G/ \Gamma$ be a $(2n + 1)$-dimensional solvmanifold endowed with a cosymplectic structure.
Then the product $M \times S^1$ is a solvmanifold of the form $G \times \R /(\Gamma \times \Z)$ and has
a K\"ahler structure. By \cite{Ha} $M$ is thus a finite quotient of a torus and $\Gamma \times \Z$
is an extension of a free abelian group  of rank $2l$ by the free abelian group of rank $2k$, where $2k$ is the first
Betti number of $M \times S^1$, with $k + l = n + 1$.

Therefore we have
$$
0 \to \Z^{2l -1} \to \Gamma \to \Z^{2k} \to 0,
$$
where the maximal normal free abelian subgroup of rank $2 n +1$ with finite index in $\Gamma$ is of the form $\Z^{2l -1}
\times s_1 \Z  \times s_2 \Z  \times \cdots \times s_{2k} \Z$ and the holonomy group of the Bieberbach group  $\Gamma$ is
$\Z_{s_1} \times \Z_{s_2} \times \cdots \times \Z_{s_{2k}}$, where some of the factor $\Z_{s_{i}}$ may be trivial. This follows from the fact that by \cite{Ha}  $M \times S^1 = \C^{n + 1} / ( \Gamma \times \Z)$, where   $\Gamma \times \Z$ is a Bieberbach group with holonomy $\Z_{s_1} \times \Z_{s_2} \times \cdots \times \Z_{s_{2k}}$.  Since the action of the holonomy group on $\C^{n + 1} /( \Gamma \times \Z)$ is holomorphic, $M \times S^1$  is a holomorphic fiber bundle over the complex torus $\C^k/ \Z^{2k}$ with fiber the complex torus $\C^l/ \Z^{2l}$.

In this way $M$ is a fiber bundle over the complex torus $\C^{k} / \Z^{2k}$ with fiber the torus $\R^{2l - 1}/\Z^{2l -1}$.
\end{proof}
\noindent As a direct consequence of  previous theorem
we get the following
\begin{cor}
A solvmanifold $M=G / \Gamma$ of completely solvable type   has  a cosymplectic structure if and only if it is a torus.
\end{cor}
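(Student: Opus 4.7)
The strategy is to combine Theorem \ref{Giapponese} with the classical Mostow--Saito rigidity theorem for completely solvable solvmanifolds. The backward implication is trivial: a torus $T^{2n+1} = \R^{2n+1}/\Z^{2n+1}$ splits as $T^{2n}\times S^1$ and so admits the obvious cosymplectic structure obtained from a flat K\"ahler structure on $T^{2n}$ together with $\alpha=d\theta$, $\xi=\partial_{\theta}$ on $S^1$. The work lies entirely in the forward implication.

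Assume therefore that $M=G/\Gamma$ is of completely solvable type and carries a cosymplectic structure. I would first invoke Theorem \ref{Giapponese} to produce a normal subgroup of finite index $\Gamma_0\lhd\Gamma$ such that the finite cover $\widetilde{M}:=G/\Gamma_{0}$ is diffeomorphic to a torus $T^{2n+1}=\R^{2n+1}/\Z^{2n+1}$. Thus $\widetilde{M}$ admits two presentations as a completely solvable solvmanifold: on the one hand $G/\Gamma_{0}$, with $G$ simply connected and completely solvable, and on the other hand the abelian presentation $\R^{2n+1}/\Z^{2n+1}$. By the Mostow--Saito rigidity theorem, two simply connected completely solvable Lie groups that admit isomorphic uniform lattices are themselves isomorphic as Lie groups; applying this to $G$ and $\R^{2n+1}$ I would conclude $G\cong\R^{2n+1}$ as Lie groups. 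Consequently $\Gamma$ is a discrete cocompact subgroup of $\R^{2n+1}$, and $M=\R^{2n+1}/\Gamma$ is itself a torus.

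The only potentially delicate point is the appeal to Mostow--Saito rigidity; everything else is book-keeping about finite covers. An alternative route, which I would mention for completeness, is to pass to $M\times S^{1}=(G\times\R)/(\Gamma\times\Z)$. The product group $G\times\R$ is again simply connected and completely solvable, and the almost complex structure sending $\xi$ to $\partial_{\theta}$ and restricting to $J$ on $\ker\alpha$, together with the K\"ahler form $\omega+\alpha\wedge d\theta$, turns $M\times S^{1}$ into a K\"ahler solvmanifold of completely solvable type. Applying Hasegawa's theorem from \cite{Ha} directly to $M\times S^{1}$ and then the same rigidity argument yields that $G\times\R$, and hence $G$, is abelian. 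Either way, the corollary follows with no substantive additional difficulty beyond citing the Mostow--Saito rigidity statement.
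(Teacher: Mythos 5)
Your argument is correct and is essentially the one the paper intends: the corollary is stated there as a direct consequence of Theorem \ref{Giapponese}, and the step you make explicit is precisely the Mostow--Saito rigidity argument (the finite cover $G/\Gamma_0$ diffeomorphic to a torus forces $\Gamma_0\cong\Z^{2n+1}$, hence $G\cong\R^{2n+1}$ by rigidity for completely solvable groups, hence $M$ itself is a torus). Your alternative route via the K\"ahler structure on $M\times S^1$ and Hasegawa's theorem is also sound and uses the same mechanism already employed in the proof of Theorem \ref{Giapponese}.
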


{\bf Acknowledgements}  We would like to thank Ernesto Buzano  for useful  comments.
We also would like to thank the referee for valuable remarks and suggestions for a better presentation
of the results.

\end{document}